\newcounter{LemmaCounter}
\newcounter{TheoremCounter}
\newtheorem{theorem}[TheoremCounter]{Theorem}
\newtheorem{corollary}[TheoremCounter]{Corollary}
\newtheorem{definition}[TheoremCounter]{Definition}
\begin{document}

%
%
\title[On a Mean-Value of Gadiyar and Padma]{On a Mean-Value of Gadiyar and Padma: \\ Applying Ramanujan-Fourier Expansions to Conjectures B and D of Hardy and Littlewood}
\author{John Washburn}
\address{N128W12795 Highland Road \\ Germantown, WI 53022}
\urladdr{http://www.WashburnResearch.org}
\email{math@WashburnResearch.org}
\date{\today}
\keywords{
Analytic Number Theory,
Ramanujan-Fourier Series,
Ramanujan Sums,
Distribution of Primes,
Twinned Primes,
Twin Primes,
Hardy-Littlewood Conjecture,
arithmetic functions
}
\subjclass[2000]{Primary 11A41; Secondary 11L03, 11L07, 42A32}

\begin{abstract}

The modified von Mangoldt function,
$\frac{\phi(n)}{n}\Lambda(n)$,
was proved by Hardy \cite{bibitem:Hardy01},
to have a point-wise convergent Ramanujan-Fourier expansion of:
\begin{equation*}
\frac{\phi(n)}{n}\Lambda(n) = 
\sum\limits_{q=1}^{\infty}\frac{\mu(q)}{\phi(q)} c_q(n)
\quad
n \in \mathbb{N}
\end{equation*}
where: $\mu(q)$ is the M\"obius function,
$\phi(q)$ is Euler's totient function,
and $c_q(n)$ is the $q^{th}$ Ramanujan sum defined as:
\begin{equation*}
c_q(n) = \sum\limits_{\substack{k=1 \\ (k,q)=1}}^{q}
e^{2 \pi i \frac{k}{q} n }
\end{equation*}

In their 1999 paper \cite{bibitem:GadiyarPadma:WienerKhinchin},
Gadiyar and Padma imported into sieve theory
the Weiner-Khintchine theorem from the Fourier analysis of stochastic processes,
to produce the heuristic result:
\begin{equation} \label{Eq:Abstract:GadiyarPadmaEquation}
\begin{aligned}
\lim_{N \to \infty}
\frac{1}{N}
\sum\limits_{n=1}^{N}
\frac{\phi(n)}{n}\Lambda(n)
\frac{\phi(n+h)}{n+h}\Lambda(n+h)
&=
\lim_{N \to \infty}
\frac{1}{N}
\sum\limits_{n=1}^{N}
\left[
\sum\limits_{q_1=1}^{\infty}\frac{\mu(q_1)}{\phi(q_1)} c_{q_1}(n)
\sum\limits_{q_2=1}^{\infty}\frac{\mu(q_2)}{\phi(q_2)} c_{q_2}(n+h)
\right]
\\
&=
\sum\limits_{q=1}^{\infty}
\left\Vert \frac{\mu(q)}{\phi(q)} \right\Vert^2
c_q(h)
\end{aligned}
\end{equation}
for any fixed and finite $h$.
If the interchange of limits,
$ 
\left(
\text{first:}
\lim\limits_{N \to \infty}
\text{followed by:}
\lim\limits_{q_1,q_2 \to \infty}
\right)
$ 
is justified, then the results of \eqref{Eq:Abstract:GadiyarPadmaEquation} follow easily.
The main result of this paper is to prove
the Gadiyar-Padma equation,
\eqref{Eq:Abstract:GadiyarPadmaEquation},
is true by finessing the interchange of limits using Abel summation.
\\

From the proof of \eqref{Eq:Abstract:GadiyarPadmaEquation},
the quantitative form of the Hardy-Littlewood prime pair conjecture:
\begin{equation} \label{Eq:Abstract:HardyLittlewoodPPC}
\begin{aligned}
\lim_{N \to \infty}
\frac{1}{N}
\sum\limits_{n=1}^{N}
\left( \frac{\phi(n  ) \Lambda(n  )}{n  } \right)
\left( \frac{\phi(n+h) \Lambda(n+h)}{n+h} \right)
&= C_{h}
\\
&= \begin{cases}
2
\prod\limits_{\substack{p \mid h \\ p > 2}} \left( \frac{p-1}{p-2} \right)
\prod\limits_{p>2} \left( 1 - \frac{1}{\left( p - 1 \right)^2} \right)
& \text{h is even}
\\
0 & \text{h is odd}
\end{cases}
\end{aligned}
\end{equation}
follows and is also true.
\end{abstract}

\maketitle

\newpage
\section{Introduction}

One of the remarkable achievements of
Ramanujan\cite{bibitem:Ramanujan01}, Hardy\cite{bibitem:Hardy01} and Carmichael\cite{bibitem:Carmichael01}
was the development of Ramanujan-Fourier series which converge (point-wise) to an arithmetic function.
The Ramanujan-Fourier expansion, for an arithmetic function, $a(n)$, can be given by:
\begin{equation*}
a(n) = \sum\limits_{q=1}^{\infty} a_q c_q(n)
\end{equation*}
where the $q^{th}$ Ramanujan sum, $c_q(n)$, is defined as
\begin{equation*}
c_q(n) = \sum\limits^{q}_{\substack{k=1 \\ (k,q)=1}} e^{2 \pi i \frac{k}{q} n }
\end{equation*}
and $(k,q)$ is the greatest common divisor of $k$ and $q$.
The Ramanujan-Fourier coefficient, $a_q$, is given by:
\begin{equation*}
a_q = \frac{1}{\phi(q)} \lim_{N \to \infty} \sum_{n=1}^{N} a(n) c_q(n)
\end{equation*}
where $\phi(q)$ is the Euler totient function.
For a detailed survey of Ramanujan sums and Ramanujan-Fourier expansions, the author
refers the reader to the texts:
\cite{bibitem:Sivaramakrishnan:01},
\cite{bibitem:SchwarzSpilker:01},
\cite{bibitem:Lucht:01},  and
\cite{bibitem:Murty:01}.

H. G. Gadiyar and R. Padma in their 1999 paper \cite{bibitem:GadiyarPadma:WienerKhinchin}
imported to number theory the Wiener-Khintchine theorem from signal analysis.
The Wiener-Khintchine theorem is used in the Fourier analysis of chaotic or stochastic processes,
such as Brownian motion, signal noise, and signals embedded within a significant amount of noise.
The 1999 paper \cite{bibitem:GadiyarPadma:WienerKhinchin} paper applied the Wiener-Khintchine theorem to the
Ramanujan-Fourier expansions of the arithmetic functions used in various sieve methods.

The central relation to be proved in
\cite{bibitem:GadiyarPadma:WienerKhinchin},
\cite{bibitem:GadiyarPadma:SophieGermainPrimes}, and
\cite{bibitem:GadiyarPadma:HardyLittleWoodConjecture:D},
is the Gadiyar-Padma equation:
\begin{equation} \label{eq:GadiyarPadmaEquation}
\begin{aligned}
\lim_{N \to \infty}
\frac{1}{N}
\sum\limits_{n=1}^{N}
\frac{\phi(n) \Lambda(n)}{n}
&\frac{\phi(n+h) \Lambda(n+h)}{n+h}
\\
&=
\lim_{N \to \infty}
\frac{1}{N}
\sum\limits_{n=1}^{N}
\sum\limits_{q_1 = 1}^{\infty}
\frac{\mu(q_1)}{\phi(q_2)} c_{q_1}(n)
\sum\limits_{q_2 = 1}^{\infty}
\frac{\mu(q_2)}{\phi(q_2)} c_{q_2}(n+h)
\\
&\stackrel{?}{=}
\sum\limits_{q_1 = 1}^{\infty}
\sum\limits_{q_2 = 1}^{\infty}
\frac{\mu(q_1)}{\phi(q_2)}
\frac{\mu(q_2)}{\phi(q_2)}
\left[
\lim_{N \to \infty}
\frac{1}{N}
\sum\limits_{n=1}^{N}
c_{q_1}(n)
c_{q_2}(n+h)
\right]
\\
&=
\sum\limits_{q = 1}^{\infty}
\left\Vert \frac{\mu(q)}{\phi(q)}\right\Vert^2
c_{q}(h)
\end{aligned}
\end{equation}
Which is the Wiener-Khintchine theorem applied to the auto-correlation
of the arithmetic function, $\frac{\phi(n) \Lambda(n)}{n}$.
The Gadiyar-Padma equation uses the Ramanujan-Fourier expansion of an arithmetic function to formulate
the average density of those primes and prime powers which survive a sieve.

The study of Ramanujan sums and the study of Ramanujan-Fourier expansions has progressed significantly since 1918.
Among the discoveries are:
\begin{itemize}
\item New methods for determining the R-F coefficients \cite{bibitem:Murty:02},
\item Analysis of  the convolution of functions from different measure spaces \cite{bibitem:Spilker:01}, and
\item The mean value of sums of products of Ramanujan sums \cite{bibitem:Toth:01}
\end{itemize}
The difficulty presented by the Gadiyar-Padma equation though is that the Ramanujan-Fourier expansion under study is not
tractable enough or well behaved enough to justify the interchange of limits proposed in
\cite{bibitem:GadiyarPadma:WienerKhinchin},
\cite{bibitem:GadiyarPadma:SophieGermainPrimes}, or
\cite{bibitem:GadiyarPadma:HardyLittleWoodConjecture:D},
\textbf{and} still maintain convergence. This is because the expansion,
$\frac{\phi(n) \Lambda(n)}{n} = \sum\limits_{q = 1}^{\infty} \frac{\mu(q)}{\phi(q)} c_{q}(n)$,
in \eqref{eq:GadiyarPadmaEquation}
\begin{itemize}
\item is only conditionally convergent and not absolutely convergent,
\item is only point-wise convergent in $n$ and not uniformly convergent in $n$,
\item is not an almost even function, $\mathcal{B}^q$, for some $1 \le q$, as discussed in \cite{bibitem:SchwarzSpilker:01}
\item is not a member of, $\mathcal{C}^q$, as discussed in \cite{bibitem:Lucht:01}
\item has R-F coefficients, $a_q$, such that the expression, $a_q \phi(q)$, is not square-summable.
This removes the expansion from the space of almost periodic functions belonging to the Besicovitch space, $B^2$, as discussed in:
\cite{bibitem:Besicovitch:01} and \cite{bibitem:ParsevalEquation_by_MPeter}.

\end{itemize}
If the arithmetic function under study were within one of these function spaces,
then the application of the Wiener-Khintchine theorem to the Ramanujan-Fourier expansions
would be easily justified.  This is analogous to the treatment of the Wiener-Khintchine theorem within most texts on signal analysis which begin with
\textit{\textbf{If a function has a Fourier transforms of: $\cdots$}}.
But, if a function possesses a Fourier transform,
then the proof of the Wiener-Khintchine theorem is trivial.
The power and utility of the Wiener-Khintchine theorem is that it also applies
to signals which do not possess a Fourier transform. As discussed by Wu and Vaswani
in \cite{bibitem:LuVaswani:01} and \cite{bibitem:LuVaswani:02},
the interchange of limits must be justified in a signal processing context as well.
Wu and  Vaswani note that within several prominent texts on signal processing
purporting``prove'' the Wiener-Khintchine theorem, the justification  for interchanging the limits is left unstated
or the stated justification is not sufficient.

This paper presents the Ramanujan-Abel method which permits the interchange of limits under far more relaxed conditions,
but does so at cost of temporarily accepting summability in lieu of convergence.
The limits can be interchanged, but the resulting expression is then
merely summable to some value rather than convergent to that value.
Abel's theorem for a power series is then used to demonstrate the desired convergence exists.

\newpage
\section{Definitions and Notation}

This paper has the following notational conventions:
\begin{itemize}
\item $n$ is a positive integer.
\item $x$ is real.
\item $p$ is an arbitrary prime.
\item $q$ is a positive integer unless the context clearly indicates otherwise.
\item $z$ is a real within the open interval: $0 < z < 1$.
\end{itemize}

\begin{definition} \label{def:TwinPrimeConstant}
Throughout this paper $C_2$ is the the twinned primes constant from \cite{bibitem:HardyLittlewood01} and is given by:
\begin{equation*}
C_2 = \prod\limits_{p > 2} \left( 1 - \frac{1}{\left( p - 1\right)^2} \right)
\end{equation*}
The approximate value of $C_2$ is:
\begin{equation*}
C_2 = 0.6601618158\dots
\end{equation*}
\end{definition}

\begin{definition} \label{def:RamanujanSum:Real}
The real-value Ramanujan sum, $c_q\left( x \right)$, is defined as:
\begin{equation*}
c_q\left( x \right) =
\begin{cases}
1 & q = 0
\\
\cos\left( 2 \pi x \right) & q = 1
\\
\cos\left( \pi x \right) & q = 2
\\
2 \sum\limits_{\substack{k=1 \\ (k,q)=1}}^{\left\lfloor \frac{q}{2} \right\rfloor}
\cos\left( 2 \pi \frac{k}{q} x \right) & q \ge 3
\end{cases}
\end{equation*}
\end{definition}

\begin{definition} \label{def:ModifiedLambda}
The modified von Mangoldt function, $\Lambda_{1}\left( n \right)$, is defined as:
\begin{equation*}
\begin{aligned}
\Lambda_{1}\left( n \right)
&=
\frac{\phi(n) }{ n } \Lambda(n)
\\
&=
\begin{cases}
\left( \frac{ p-1 }{p} \right) \log{p}  & n = p^k \text{, } 1 \le k \text{, and $p$ is prime}
\\
0 & \text{ otherwise}
\end{cases}
\end{aligned}
\end{equation*}
where $\Lambda(n)$ is the von Mangoldt function defined as:
\begin{equation*}
\Lambda(n) =
\begin{cases}
\log{p} & n = p^k \text{ } 1 \le k \text{ and $p$ is prime}
\\
0 & \text{ otherwise}
\end{cases}
\end{equation*}
\end{definition}

\begin{definition} \label{def:ModifiedLambda:Reals}
The modified von Mangoldt function can be extended to the reals with the following definition:
\begin{equation*}
\Lambda_1\left( x \right)
\stackrel{def}{=}
\sum\limits_{q=1}^{\infty}
\frac{\mu\left( q \right)}{\phi\left( q \right)}
c_q\left( x \right)
\end{equation*}
\end{definition}

\begin{definition} \label{def:BunyakovskyCondition:ChebyshevLikeFunction}
A Bunyakovsky set is a set of $m$ functions,
$f_1(T), f_2(T), f_3(T), \cdots, f_{m-1}(T), f_{m}(T)$, for $m \ge 1$ and $f_j(T) \in \mathbb{Z}\left[ T \right]$
\\
where:
\begin{enumerate}[(a)]
\item Each $f_j$ has a positive leading coefficient.
\item Each of the $f_j$'s is irreducible in $\mathbb{Q}\left[ T \right]$
\item The $f_j$'s are pair-wise prime in $\mathbb{Q}\left[ T \right]$
\item For no prime, $p$, is the product, $f(n) = f_1(n) \cdots f_m(n)$, divisible by $p$ for all integers, $n$.
That is, the function, $f:\mathbb{Z} \to \mathbb{Z}/(p)$, is not identically zero for any prime $p$.
\end{enumerate}

See
\cite{bibitem:Baier:Bateman-Horn},
\cite{bibitem:Bateman-Horn01}, and
\cite{bibitem:Conrad:HardyLittlewoodConstants}
for more information on the the Bateman-Horn Conjecture and its relation to the Bunyakovsky conditions.
\end{definition}

\begin{definition} \label{def:RamanujanExpansion:PowerSeries}
Given an arithmetic function, $f(n)$, with a point-wise convergent Ramanujan-Fourier (R-F) expansion of:
\begin{equation*}
f(n) = \sum\limits_{q=1}^{\infty} a_q c_q(n)
\end{equation*}
then the related functions, $f(z, n)$ and $f(z, x)$,
are the Ramanujan expansion power series and are given by:
\begin{equation*}
\begin{aligned}
f(z, n)
&\stackrel{def}{=}
\sum\limits_{q=1}^{\infty} a_q z^q c_q(n)
\\
f(z, x)
&\stackrel{def}{=}
\sum\limits_{q=1}^{\infty} a_q z^q c_q(x)
\end{aligned}
\end{equation*}
\end{definition}

\begin{definition} \label{def:RamanujanExpansion:PartialSummation}
Given a Ramanujan expansion power series of:
\begin{align*}
f(z, n) &= \sum\limits_{q=1}^{\infty} a_q z^q c_q(n)
\\
&\text{or}
\\
f(z, x) &= \sum\limits_{q=1}^{\infty} a_q z^q c_q(x)
\end{align*}
then the partial summation of the Ramanujan expansion power series for the first $Q$ terms is given by:
\begin{align*}
f(Q, z, n) 
&\stackrel{def}{=}
\sum\limits_{q=1}^{Q} a_q z^q c_q(n)
\\
f(Q, z, x) 
&\stackrel{def}{=}
\sum\limits_{q=1}^{Q} a_q z^q c_q(x)
\end{align*}
\end{definition}

\newpage
\section{The Convergence Properties of the Ramanujan expansion power series related to  $\Lambda_1\left( n\right)$ }

\begin{theorem} \label{thm:AbsoluteConvergence:RFPowerSeries:Lambda1}
For any fixed $z$ on the open interval, $0 < z < 1$, the Ramanujan expansion power series,
$\Lambda_1\left( z, x \right)$,
converges absolutely for all $x \in \mathbb{R}$
\end{theorem}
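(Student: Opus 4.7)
The plan is to show absolute convergence by dominating the series term-by-term with a convergent geometric series, using the $z^q$ factor as the regularizer that tames the otherwise conditional behavior of the Ramanujan--Fourier expansion of $\Lambda_1$.

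First I would obtain a uniform bound on $|c_q(x)|$ valid for all real $x$. From \Cref{def:RamanujanSum:Real}, for $q \ge 3$ we have
\begin{equation*}
\bigl| c_q(x) \bigr|
\;\le\;
2 \sum_{\substack{k=1 \\ (k,q)=1}}^{\lfloor q/2 \rfloor}
\bigl| \cos(2\pi k x/q) \bigr|
\;\le\;
2 \cdot \tfrac{\phi(q)}{2}
\;=\;
\phi(q),
\end{equation*}
and the cases $q=1,2$ are handled directly by the cosine definition, giving $|c_q(x)| \le 1 \le \phi(q)$. So $|c_q(x)| \le \phi(q)$ for every $q \ge 1$ and every $x \in \mathbb{R}$.

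Next I would combine this with the coefficient bound $|\mu(q)/\phi(q)| \le 1/\phi(q)$ (since $|\mu(q)| \le 1$). For any fixed $z$ with $0 < z < 1$, the $q$th term of the Ramanujan expansion power series satisfies
\begin{equation*}
\left|
\frac{\mu(q)}{\phi(q)} \, z^q \, c_q(x)
\right|
\;\le\;
\frac{1}{\phi(q)} \cdot z^q \cdot \phi(q)
\;=\;
z^q,
\end{equation*}
uniformly in $x$. Summing, $\sum_{q=1}^{\infty} z^q = z/(1-z)$ is finite, so the comparison test delivers absolute convergence of $\Lambda_1(z,x)$ for every $x \in \mathbb{R}$, completing the proof. (As a bonus, the bound is independent of $x$, so the convergence is in fact uniform in $x$, which will likely be useful in later sections.)

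There is no real obstacle here: the only ingredient that requires any work is the bound $|c_q(x)| \le \phi(q)$, and that follows immediately from counting the cosine terms in \Cref{def:RamanujanSum:Real}. The point of the theorem is precisely that the Abel-style factor $z^q$, inserted for $0 < z < 1$, converts the delicate conditional convergence of the original Ramanujan--Fourier expansion of $\Lambda_1(n)$ into uniform absolute convergence on all of $\mathbb{R}$; the whole strategy of the paper rests on exploiting this regularization and then letting $z \to 1^-$ via Abel's theorem.
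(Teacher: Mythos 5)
Your proof is correct and follows essentially the same route as the paper's: bound $\left\vert \mu(q)/\phi(q) \right\vert \le 1/\phi(q)$ and $\left\vert c_q(x) \right\vert \le \phi(q)$, so each term is dominated by $z^q$ uniformly in $x$, and compare with the geometric series $z/(1-z)$. The only difference is that you explicitly justify the bound $\left\vert c_q(x) \right\vert \le \phi(q)$ from the cosine definition, whereas the paper invokes it as a listed property; this is a welcome addition, not a divergence.
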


\begin{proof}
\begin{equation} \label{eq:Lambda1:RealValuedVersion}
\Lambda_1\left( z, x \right)
=
\sum\limits_{q=1}^{\infty}
\frac{\mu\left( q \right)}{\phi\left( q \right)}
c_q\left( x \right)
z^q
\end{equation}

On the open interval, $0 < z < 1$,
the right hand sided of \eqref{eq:Lambda1:RealValuedVersion} converges absolutely
because:
\begin{align*}
\sum\limits_{q=1}^{\infty}
\frac{\mu\left( q \right)}{\phi\left( q \right)}
c_q\left( x \right)
z^q
&\le
\sum\limits_{q=1}^{\infty}
\left\vert
\frac{\mu\left( q \right)}{\phi\left( q \right)}
c_q\left( x \right)
z^q
\right\vert
\\
&\le
\sum\limits_{q=1}^{\infty}
\frac{\left\vert \mu\left( q \right) \right\vert }{\phi\left( q \right)}
\phi\left( q \right)
z^q
\\
&\le
\sum\limits_{q=1}^{\infty}
z^q
\\
&\le
\frac{z}{1- z}
\end{align*}
Since the convergence of
\begin{equation*}
\sum\limits_{q=1}^{\infty}
\left\vert
\frac{\mu\left( q \right)}{\phi\left( q \right)}
c_q\left( x \right)
z^q
\right\vert
\end{equation*}
depends on $z$, but is without regard to $x$, the convergence of the power series,
$\Lambda_1\left(z , x \right)$,
is absolute for all $x \in \mathbb{R}$.
\end{proof}

\begin{theorem} \label{thm:UniformConvergence:RFPowerSeries:Lambda1}
For any fixed, $z$, on the open interval, $0 < z < 1$, sequence of functions,
$\Lambda_1\left(Q, z, x \right)$,
converge uniformly to the Ramanujan expansion power series,
$\Lambda_1\left( z, x \right)$,
for all $x \in \mathbb{R}$.
\end{theorem}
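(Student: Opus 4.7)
My plan is to prove uniform convergence by a direct tail estimate, essentially promoting the bound already obtained in the proof of Theorem~\ref{thm:AbsoluteConvergence:RFPowerSeries:Lambda1} to a Weierstrass $M$-test argument. The key observation is that in that previous proof, the bound on each summand was independent of $x$, so that same bound controls the tail uniformly in $x$.

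First, I would record that for every real $x$ and every positive integer $q$, one has $|c_q(x)| \le \phi(q)$. This follows directly from Definition~\ref{def:RamanujanSum:Real}: for $q=1,2$ the sum is a single cosine bounded by $1 = \phi(q)$, and for $q \ge 3$ the sum consists of $\phi(q)/2$ cosine terms each bounded by $1$, multiplied by $2$. Hence for any $q$, any real $x$, and any $z$ with $0 < z < 1$,
\begin{equation*}
\left| \frac{\mu(q)}{\phi(q)} c_q(x) z^q \right| \le \frac{|\mu(q)|}{\phi(q)} \cdot \phi(q) \cdot z^q \le z^q.
\end{equation*}

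Second, I would apply the Weierstrass $M$-test with $M_q := z^q$. Because $\sum_{q=1}^{\infty} z^q = z/(1-z)$ converges for fixed $z \in (0,1)$ and the bound $M_q$ does not depend on $x$, the partial sums $\Lambda_1(Q,z,x)$ converge uniformly in $x \in \mathbb{R}$ to $\Lambda_1(z,x)$. Quantitatively, the tail satisfies
\begin{equation*}
\bigl| \Lambda_1(z,x) - \Lambda_1(Q,z,x) \bigr|
\le \sum_{q=Q+1}^{\infty} z^q = \frac{z^{Q+1}}{1-z},
\end{equation*}
which tends to $0$ as $Q \to \infty$ at a rate independent of $x$.

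There is no real obstacle here; the theorem is essentially a reinterpretation of the previous one. The only point requiring care is to observe explicitly that the bound $|c_q(x)| \le \phi(q)$ holds for the real-variable Ramanujan sum from Definition~\ref{def:RamanujanSum:Real}, not merely for integer arguments where $c_q(n)$ is a sum of $\phi(q)$ unit-modulus complex exponentials. Once that pointwise-in-$q$, uniform-in-$x$ bound is in hand, the $M$-test delivers the conclusion immediately.
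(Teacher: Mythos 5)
Your proposal is correct and follows essentially the same route as the paper: both bound each summand by $\left\vert \frac{\mu(q)}{\phi(q)} c_q(x) z^q \right\vert \le z^q$ uniformly in $x$ and conclude from the geometric tail $\frac{z^{Q+1}}{1-z} \to 0$, the only cosmetic difference being that you name the Weierstrass $M$-test while the paper writes out the $\epsilon$--$Q_z(\epsilon)$ argument explicitly. Your added remark verifying $\left\vert c_q(x) \right\vert \le \phi(q)$ for the real-valued Ramanujan sum of Definition~\ref{def:RamanujanSum:Real} is a point the paper uses without comment, so your write-up is if anything slightly more careful.
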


\begin{proof}

\begin{equation} \label{eq:Lambda1:Tails}
\begin{aligned}
\left\vert \Lambda_1\left(z, x \right) - \Lambda_1\left(Q, z, x \right) \right\vert
&=
\left\vert 
\sum\limits_{q=1}^{\infty}
\frac{\mu\left( q \right)}{\phi\left( q \right)}
c_q\left( x \right)
z^q
-
\sum\limits_{q=1}^{Q}
\frac{\mu\left( q \right)}{\phi\left( q \right)}
c_q\left( x \right)
z^q
\right\vert
\\
&=
\left\vert 
\sum\limits_{q=Q+1}^{\infty}
\frac{\mu\left( q \right)}{\phi\left( q \right)}
c_q\left( x \right)
z^q
\right\vert
\end{aligned}
\end{equation}

For any fixed $z$ within the open interval, $0 < z < 1$,
the right hand sided of \eqref{eq:Lambda1:Tails} is bounded by:
\begin{equation} \label{eq:BoundedTails}
\begin{aligned}
\left\vert 
\sum\limits_{q=Q+1}^{\infty}
\frac{\mu\left( q \right)}{\phi\left( q \right)}
c_q\left( x \right)
z^q
\right\vert
&\le
\sum\limits_{q=Q+1}^{\infty}
\left\vert 
\frac{\mu\left( q \right)}{\phi\left( q \right)}
c_q\left( x \right)
z^q
\right\vert
\\
&\le
\sum\limits_{q=Q+1}^{\infty}
\left\vert 
\frac{\mu\left( q \right)}{\phi\left( q \right)}
\right\vert
\left\vert 
c_q\left( x \right)
\right\vert
\left\vert 
z^q
\right\vert
\\
&\le
\sum\limits_{q=Q+1}^{\infty}
\frac{1}{\phi\left( q \right)}
\phi\left( q \right)
z^q
\\
&\le
\sum\limits_{q=Q+1}^{\infty}
z^q
\\
&\le
\frac{z^{Q+1}}{1-z}
\end{aligned}
\end{equation}

Combining \eqref{eq:Lambda1:Tails} and \eqref{eq:BoundedTails} simplifies to:
\begin{equation} \label{eq:PartialSummation:Tail:Bounds}
\left\vert \Lambda_1\left(z, x \right) - \Lambda_1\left(Q, z, x \right) \right\vert
\le
z^Q
\left( \frac{z}{1-z} \right)
\end{equation}

Since, $0 < z < 1$, the right hand side of the \eqref{eq:PartialSummation:Tail:Bounds}
forms a strictly decreasing sequence in $Q$ such that:
\begin{equation*}
z^{Q+N}
\left( \frac{z}{1-z} \right)
<
z^{Q+N-1}
\left( \frac{z}{1-z} \right)
\cdots
<
z^{Q+2}
\left( \frac{z}{1-z} \right)
<
z^{Q+1}
\left( \frac{z}{1-z} \right)
<
z^Q
\left( \frac{z}{1-z} \right)
\end{equation*}
for all integers $N \ge 0$.

Thus, for any fixed $z$ in the interval: $0 < z < 1$, and for any $0 < \epsilon$, it is possible to select a value, $Q_z\left( \epsilon \right)$,
such that
\begin{equation*}
z^{Q} \left( \frac{z}{1-z} \right) < \epsilon
\end{equation*}
for all $Q \ge Q_z\left( \epsilon \right)$.

Since the selection of $Q_z\left( \epsilon \right)$ depends only on $z$ and does not depend on $x$,
the sequence of functions, $\Lambda_1\left(Q, z, x \right)$,
converges uniformly in $x \in \mathbb{R}$ 
to the Ramanujan expansion power series, $\Lambda_1\left(z, x \right)$
for every fixed $z$ on the interval $0 < z < 1$.

\end{proof}

\begin{corollary} \label{cor:AbsoluteConvergence:Lambda1:Integers}
A corollary to \Cref{thm:AbsoluteConvergence:RFPowerSeries:Lambda1} is 
on the open interval, $0 < z < 1$, the Ramanujan expansion power series,
$\Lambda_1\left(z,  n \right)$,
converges absolutely for all $n \in \mathbb{Z}$
\end{corollary}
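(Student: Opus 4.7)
The plan is to deduce this corollary directly from \Cref{thm:AbsoluteConvergence:RFPowerSeries:Lambda1} by restricting the real variable $x$ to integer values, once we have verified that the real-valued Ramanujan sum $c_q(x)$ of \Cref{def:RamanujanSum:Real} actually restricts to the usual integer-argument Ramanujan sum $c_q(n)$ when $x = n \in \mathbb{Z}$. If that restriction holds, then $\Lambda_1(z,n)$ in the sense of \Cref{def:RamanujanExpansion:PartialSummation} (applied with $f = \Lambda_1$ and $x = n$) is literally the same series as $\Lambda_1(z,x)\big|_{x=n}$, so absolute convergence in $x$ immediately implies absolute convergence at every integer.

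First I would spell out the identification of the two versions of $c_q$ at integer arguments. For $q=1$, $\cos(2\pi n) = 1 = c_1(n)$; for $q=2$, $\cos(\pi n) = (-1)^n = c_2(n)$; and for $q \ge 3$ the pairing $k \leftrightarrow q-k$ among reduced residues (none of which equal $q/2$ for $q \ge 3$) collapses the sum $\sum_{(k,q)=1} e^{2\pi i kn/q}$ to $2\sum_{1 \le k \le \lfloor q/2 \rfloor,\,(k,q)=1}\cos(2\pi kn/q)$, which matches \Cref{def:RamanujanSum:Real}. Hence the two notations agree on $\mathbb{Z}$ and the basic bound $|c_q(n)| \le \phi(q)$ remains valid.

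Next, given any $n \in \mathbb{Z}$, I would apply \Cref{thm:AbsoluteConvergence:RFPowerSeries:Lambda1} at $x = n$. That theorem supplies the bound
\begin{equation*}
\sum_{q=1}^{\infty} \left| \frac{\mu(q)}{\phi(q)} c_q(x) z^q \right|
\;\le\; \sum_{q=1}^{\infty} z^q \;=\; \frac{z}{1-z},
\end{equation*}
uniformly in $x \in \mathbb{R}$, where the first inequality uses $|\mu(q)| \le 1$ and $|c_q(x)| \le \phi(q)$. Setting $x = n$ yields exactly the tail bound needed for absolute convergence of $\Lambda_1(z,n)$.

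I do not expect any substantive obstacle here: the corollary is essentially an observation that the proof of \Cref{thm:AbsoluteConvergence:RFPowerSeries:Lambda1} goes through with the bound $|c_q(x)| \le \phi(q)$ used in an $x$-independent way, and the only thing worth writing down carefully is the matching of the two definitions of $c_q$ at integer points so that no notational ambiguity remains when we later specialize to $x = n$ in the main argument.
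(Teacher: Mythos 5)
Your proposal is correct and matches the paper's intent exactly: the paper offers no written proof of \Cref{cor:AbsoluteConvergence:Lambda1:Integers}, simply asserting that it follows from \Cref{thm:AbsoluteConvergence:RFPowerSeries:Lambda1} by specializing $x$ to integer values, which is precisely your argument. The one detail you spell out --- that the real-valued $c_q(x)$ of \Cref{def:RamanujanSum:Real} agrees with the classical $c_q(n)$ on $\mathbb{Z}$ --- is stated without proof as the first listed property of $c_q(x)$ in \Cref{appendix:Properties:RamanujanSums}, so your verification via the pairing $k \leftrightarrow q-k$ is a welcome but inessential addition.
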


\begin{corollary} \label{cor:UniformConvergence:Lambda1:Integers}
A corollary to \Cref{thm:UniformConvergence:RFPowerSeries:Lambda1} is 
for any fixed $z$ within the open interval, $0 < z < 1$, the sequence of functions,
$\Lambda_1\left(Q, z,  n \right)$,
converges uniformly to $\Lambda_1\left(z,  n \right)$
for all $n \in \mathbb{Z}$
\end{corollary}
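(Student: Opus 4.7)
The plan is to obtain \Cref{cor:UniformConvergence:Lambda1:Integers} as a direct restriction of \Cref{thm:UniformConvergence:RFPowerSeries:Lambda1} from $\mathbb{R}$ to the subset $\mathbb{Z} \subset \mathbb{R}$. First I would reconcile the two definitions of the Ramanujan sum in play: the integer-indexed $c_q(n) = \sum_{(k,q)=1} e^{2\pi i k n / q}$ used in \Cref{def:ModifiedLambda} and the real-valued $c_q(x)$ of \Cref{def:RamanujanSum:Real}. By pairing $k$ with $q-k$ in the sum over reduced residues modulo $q$ and applying $e^{2\pi i k n / q} + e^{-2\pi i k n / q} = 2\cos(2\pi k n / q)$, the two definitions agree whenever the argument is an integer. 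Consequently $\Lambda_1(z, n)$ and $\Lambda_1(Q, z, n)$ are simply the restrictions of $\Lambda_1(z, x)$ and $\Lambda_1(Q, z, x)$ to $x = n \in \mathbb{Z}$.

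Second, I would invoke the uniform bound established in the proof of \Cref{thm:UniformConvergence:RFPowerSeries:Lambda1}, namely
\begin{equation*}
\left\vert \Lambda_1(z, x) - \Lambda_1(Q, z, x) \right\vert \le z^Q \left( \frac{z}{1-z} \right),
\end{equation*}
which holds for every $x \in \mathbb{R}$ with a right-hand side that is independent of $x$. Restricting $x$ to integer values yields the identical bound for $\left\vert \Lambda_1(z, n) - \Lambda_1(Q, z, n) \right\vert$. Since for $0 < z < 1$ the bound tends monotonically to $0$ as $Q \to \infty$, the threshold $Q_z(\epsilon)$ defined in the theorem can be chosen to depend only on $z$ and $\epsilon$, with no dependence on $n$. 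That is precisely the statement of uniform convergence in $n \in \mathbb{Z}$.

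There is no serious obstacle here: the only non-routine step is confirming the equivalence of the two definitions of $c_q$ on integer arguments, which follows from Euler's formula together with the symmetry $k \leftrightarrow q-k$ of the reduced residue system modulo $q$. The corollary itself reflects the general principle that uniform convergence is inherited by restriction to any subset of the domain, so the substantive work has already been done in the proof of the theorem.
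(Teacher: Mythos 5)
Your proposal is correct and follows exactly the route the paper intends: the paper states this corollary without a separate proof, treating it as the restriction of the uniform convergence on $\mathbb{R}$ (with the $n$-independent bound $z^Q \frac{z}{1-z}$) to the subset $\mathbb{Z}$. Your additional check that the real-valued $c_q(x)$ of \Cref{def:RamanujanSum:Real} agrees with the exponential-sum definition at integer arguments is the same fact the paper records as property (a) of the real-valued Ramanujan sums in \Cref{appendix:Properties:RamanujanSums}, so nothing is missing.
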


\begin{theorem} \label{thm:Convergence:RFPowerSeries:Lambda1}
In the limit, the Ramanujan expansion power series related to
$\Lambda_1(z, n)$ converges point-wise to:
\begin{equation*}
\lim\limits_{z \to 1^-}
\left[
\sum\limits_{q=1}^{\infty}
\frac{\mu\left( q \right)}{\phi\left( q \right)}
c_q\left( n \right)
z^q
\right]
=
\Lambda_1\left( n \right)
\end{equation*}
for $n \in \mathbb{N}$
\end{theorem}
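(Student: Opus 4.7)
The plan is short: reduce to Abel's classical theorem for power series after invoking Hardy's point-wise convergence result. Fix $n \in \mathbb{N}$ and set $a_q := \frac{\mu(q)}{\phi(q)} c_q(n)$, so that the object of study becomes an ordinary numerical power series
\begin{equation*}
\Lambda_1(z,n) = \sum_{q=1}^{\infty} a_q z^q.
\end{equation*}
The discrete variable $n$ is inert throughout; only the variable $z$ moves.

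First I would cite the Hardy result quoted in the abstract, namely that the Ramanujan-Fourier expansion of $\Lambda_1$ is point-wise convergent on $\mathbb{N}$, which gives
\begin{equation*}
\sum_{q=1}^{\infty} a_q \;=\; \sum_{q=1}^{\infty} \frac{\mu(q)}{\phi(q)} c_q(n) \;=\; \Lambda_1(n).
\end{equation*}
This says the numerical series $\sum_q a_q$ converges (conditionally) to the value $\Lambda_1(n)$. Second, \Cref{thm:AbsoluteConvergence:RFPowerSeries:Lambda1} already guarantees that the power series $\sum_q a_q z^q$ converges absolutely for every $z \in (0,1)$, so its radius of convergence is at least $1$ and $\Lambda_1(z,n)$ is a well-defined analytic function of $z$ on the open unit disk.

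Third, I would invoke Abel's theorem for power series in the standard form: if $\sum_{q\ge 1} a_q$ converges to $S$, then $\lim_{z\to 1^-}\sum_{q\ge 1} a_q z^q = S$. Applying this with $S = \Lambda_1(n)$ yields
\begin{equation*}
\lim_{z\to 1^-}\Lambda_1(z,n) = \Lambda_1(n),
\end{equation*}
which is the claim. There is no real obstacle; the hard analytic input is entirely contained in Hardy's convergence theorem, and the passage $z\to 1^-$ is handled by the usual Abel-summation proof of Abel's theorem, which is precisely the mechanism the paper advertises under the name \emph{Ramanujan-Abel method}. The only point to flag explicitly is that Abel's theorem applies verbatim because, with $n$ fixed, we are looking at an ordinary real power series in $z$, so no interchange of limits in $n$ or $q$ occurs at this stage.
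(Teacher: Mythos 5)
Your proposal is correct and follows essentially the same route as the paper: fix $n$, invoke Hardy's point-wise convergence of $\sum_q \frac{\mu(q)}{\phi(q)} c_q(n)$ to $\Lambda_1(n)$, and apply Abel's theorem for power series to pass to the limit $z \to 1^-$. If anything, your version is slightly cleaner, since you keep the application of Abel's theorem restricted to integer $n$ (where Hardy's convergence result actually applies) rather than stating it for all real $x$ as the paper's intermediate equation does.
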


\begin{proof}
The Ramanujan-Fourier (R-F) expansion of $\Lambda_1\left( n \right)$ is given by:
\begin{equation} \label{eq:Lambda1:InitalRFExpansion}
\Lambda_1\left( n \right)
=
\sum\limits_{q=1}^{\infty}
\frac{\mu\left( q \right)}{\phi\left( q \right)}
c_q\left( n \right)
\quad
n \in \mathbb{N}
\end{equation}
The point-wise convergence of \eqref{eq:Lambda1:InitalRFExpansion} was proved by Hardy in \cite{bibitem:Hardy01}.
By \Cref{thm:AbsoluteConvergence:RFPowerSeries:Lambda1} the power series:
\begin{equation*}
\sum\limits_{q=1}^{\infty}
\frac{\mu\left( q \right)}{\phi\left( q \right)}
c_q\left( x \right)
z^q
\end{equation*}
is convergent for all $x \in \mathbb{R}$ for any fixed $z$ within the open interval, $0 < z < 1$.
Abel's theorem states that, in the limit, the power series converges point-wise to:
\begin{equation} \label{eq:PowerSeries:RFExpansion}
\lim\limits_{r \to 1^-}
\left[
\sum\limits_{q=1}^{\infty}
\frac{\mu\left( q \right)}{\phi\left( q \right)}
c_q\left( x \right)
z^q
\right]
=
\sum\limits_{q=1}^{\infty}
\frac{\mu\left( q \right)}{\phi\left( q \right)}
c_q\left( x \right)
\quad
x \in \mathbb{R}
\end{equation}
Combining \eqref{eq:Lambda1:InitalRFExpansion} and \eqref{eq:PowerSeries:RFExpansion} yields:
\begin{equation} \label{eq:Lambda1:RFExpansion:PowerSeries}
\lim\limits_{z \to 1^-}
\left[
\sum\limits_{q=1}^{\infty}
\frac{\mu\left( q \right)}{\phi\left( q \right)}
c_q\left( n \right)
z^q
\right]
=
\Lambda_1\left( n \right)
\quad
n \in \mathbb{N}
\end{equation}

\end{proof}

\newpage
\section{Proof of Conjecture B of Hardy and Littlewood}

In \cite{bibitem:GadiyarPadma:WienerKhinchin} Gadiyar and Padma tackle the
question of the prime pairs conjecture of which the twin prime conjecture is a special case.
The prime pairs conjecture is Conjecture B from \cite{bibitem:HardyLittlewood01}.
In \cite{bibitem:GadiyarPadma:WienerKhinchin} (absent a justification for the interchange of limits) the following is ``proved'':
\begin{equation} \label{eq:GadiyarPadma:PPCResult}
\lim_{N \to \infty}
\frac{1}{N}
\sum\limits_{n = 1}^{N}
\frac{\phi\left( n \right) \Lambda\left( n \right)}{n}
\frac{\phi\left( n + 2 h \right) \Lambda\left( n + 2 h \right)}{n + 2 h}
=
2 C_2
\prod\limits_{\substack{p > 2 \\ p \mid 2 h}} \left( \frac{p-1}{p-2}\right)
\end{equation}
\Cref{eq:GadiyarPadma:PPCResult} (if true) is sufficient to prove the existential portion of
Conjecture B;
\textit{
For every positive, even number $2 h$, there exist an infinite number of prime pairs $p$, $p + 2h$.
}
The Ramanujan-Abel method can be used to prove \eqref{eq:GadiyarPadma:PPCResult}
is true for any fixed, even number, $2 h$.

\begin{theorem} [Prime Pairs are Infinite -- Conjecture B of \cite{bibitem:HardyLittlewood01}] \label{thm:PrimePairsAreInfinite}
For every positive, even number, $k = 2 h$, there are infinitely many prime pairs, $p,p\prime = p + k$.
\end{theorem}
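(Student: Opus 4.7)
The plan is to deduce the theorem from the quantitative mean-value
\begin{equation*}
\lim_{N \to \infty} \frac{1}{N} \sum_{n=1}^{N} \Lambda_1(n)\,\Lambda_1(n+2h) \;=\; 2 C_2 \prod_{\substack{p > 2 \\ p \mid 2h}} \frac{p-1}{p-2},
\end{equation*}
which is \eqref{eq:GadiyarPadma:PPCResult}. Since the right-hand side is strictly positive for every even $k = 2h$, the partial sum $\sum_{n \le N} \Lambda_1(n)\Lambda_1(n+2h)$ grows linearly in $N$, so infinitely many $n$ contribute a positive summand. Because $\Lambda_1$ is supported on prime powers, each such $n$ makes both $n$ and $n+2h$ prime powers, and the standard estimate that proper prime powers up to $N$ number only $O(\sqrt{N}\log N)$ forces infinitely many $n$ for which $n$ and $n+2h$ are themselves prime, proving the theorem.

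The core of the work is thus the rigorous proof of \eqref{eq:GadiyarPadma:PPCResult} via the Ramanujan-Abel method, bypassing the illegitimate interchange of limits in the formal argument of Gadiyar and Padma. First I would replace each factor $\Lambda_1(n)$ by its power-series regularization $\Lambda_1(z, n)$ from \Cref{def:RamanujanExpansion:PowerSeries}. By \Cref{thm:AbsoluteConvergence:RFPowerSeries:Lambda1} and \Cref{cor:UniformConvergence:Lambda1:Integers}, for each fixed $z \in (0,1)$ the series $\Lambda_1(z, n)$ converges absolutely and uniformly in $n$, with tail bound depending only on $z$. This absolute and uniform convergence legalizes the interchange of $\lim_{N \to \infty}\frac{1}{N}\sum_{n=1}^{N}$ with the double sum in
\begin{equation*}
\Lambda_1(z, n)\,\Lambda_1(z, n+2h) \;=\; \sum_{q_1=1}^{\infty}\sum_{q_2=1}^{\infty} \frac{\mu(q_1)\,\mu(q_2)}{\phi(q_1)\,\phi(q_2)}\,z^{q_1 + q_2}\,c_{q_1}(n)\,c_{q_2}(n+2h).
\end{equation*}

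Applying the classical Ramanujan-sum orthogonality
\begin{equation*}
\lim_{N \to \infty} \frac{1}{N}\sum_{n=1}^{N} c_{q_1}(n)\,c_{q_2}(n+2h) \;=\; \delta_{q_1, q_2}\,c_{q_1}(2h),
\end{equation*}
the double sum collapses to the single power series $\sum_{q=1}^{\infty}\bigl(\mu(q)/\phi(q)\bigr)^{2}\,c_q(2h)\,z^{2q}$, which is absolutely convergent on $(0,1)$. A standard Euler-product manipulation, using the multiplicativity of $q \mapsto \mu(q)^2/\phi(q)^2$ together with the known values of $c_q(2h)$ in terms of $\gcd(q, 2h)$, rewrites the $z \to 1^-$ limit of this series as $2 C_2 \prod_{p>2,\, p \mid 2h}(p-1)/(p-2)$; because the underlying series is known (classically, from Hardy-Littlewood) to converge, Abel's theorem guarantees its Abel sum equals its ordinary sum.

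The remaining and genuinely delicate step, and what I expect to be the main obstacle, is the exchange
\begin{equation*}
\lim_{z \to 1^-} \lim_{N \to \infty} \frac{1}{N}\sum_{n=1}^{N} \Lambda_1(z, n)\,\Lambda_1(z, n+2h) \;=\; \lim_{N \to \infty} \frac{1}{N}\sum_{n=1}^{N} \Lambda_1(n)\,\Lambda_1(n+2h).
\end{equation*}
The strategy is to exploit the uniform-in-$n$ tail bound of \Cref{thm:UniformConvergence:RFPowerSeries:Lambda1}, together with \Cref{thm:Convergence:RFPowerSeries:Lambda1}, to control $\Lambda_1(n) - \Lambda_1(z, n)$ uniformly in $n$ as $z \to 1^-$; this uniform control, combined with Abel's theorem for power series, should upgrade the Abel summability produced on the left into ordinary Cesàro convergence on the right. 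Once this Tauberian-style upgrade is secured, \eqref{eq:GadiyarPadma:PPCResult} holds for every fixed even $2h$, and the theorem follows by the positivity argument of the opening paragraph.
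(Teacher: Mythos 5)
Your proposal follows essentially the same route as the paper's proof: regularize each factor with $z^q$, interchange the $N$-average with the $q$-sums using the uniform convergence of \Cref{thm:UniformConvergence:RFPowerSeries:Lambda1}, collapse the double sum via the orthogonality relation for Ramanujan sums, identify the resulting series with the Hardy--Littlewood singular series, and finally let $z \to 1^-$. (Two of your touches are actually improvements on the paper: you write the coefficient correctly as $\mu(q)^2/\phi(q)^2$ where \eqref{eq:Theorem8:PowerSeriesConvergence} drops a square, and you discount proper prime powers at the end, a step the paper omits.) But the step you yourself single out as ``genuinely delicate'' --- the interchange of $\lim_{z \to 1^-}$ with $\lim_{N \to \infty}$ --- is a real gap, and your proposed way of closing it cannot work. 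It is impossible to ``control $\Lambda_1(n) - \Lambda_1(z,n)$ uniformly in $n$ as $z \to 1^-$'': for every fixed $z \in (0,1)$ the estimate in \Cref{thm:AbsoluteConvergence:RFPowerSeries:Lambda1} gives $\left\vert \Lambda_1(z,n) \right\vert \le \frac{z}{1-z}$, a bound independent of $n$, whereas $\Lambda_1(p) = \frac{p-1}{p}\log p$ is unbounded; hence $\sup_n \left\vert \Lambda_1(n) - \Lambda_1(z,n) \right\vert = +\infty$ for every $z < 1$, and the convergence $\Lambda_1(z,n) \to \Lambda_1(n)$ is pointwise in $n$ only, never uniform. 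The tail bound \eqref{eq:PartialSummation:Tail:Bounds} is of no help either, since it degenerates as $z \to 1^-$.

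What is actually needed at this point is a Tauberian theorem. The computation legitimately shows that $\lim_{z\to 1^-} \lim_{N\to\infty} \frac{1}{N}\sum_{n\le N}\Lambda_1(z,n)\Lambda_1(z,n+2h)$ equals the singular series; the quantity you want is $\lim_{N\to\infty}\frac{1}{N}\sum_{n\le N}\Lambda_1(n)\Lambda_1(n+2h)$, which corresponds to the opposite order of limits. Abel's theorem, as invoked in \Cref{thm:Convergence:RFPowerSeries:Lambda1}, only passes from convergence to Abel summability, never back; converting Abel summability of the Ces\`aro means into genuine convergence requires a Tauberian hypothesis that neither you nor the paper supplies, and establishing the interchange here is equivalent in strength to the quantitative prime-pair conjecture itself, so it cannot be finessed by the soft convergence facts available. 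The paper's own proof asserts this step without justification (it cites \Cref{thm:Convergence:RFPowerSeries:Lambda1}, a pointwise-in-$n$ statement that says nothing about the $N$-average), so your proposal and the paper break down at exactly the same place; the difference is that you flagged the breakdown explicitly.
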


\begin{proof}
By \cref{cor:UniformConvergence:Lambda1:Integers}, 
the convergence of sequence of partial summations 
of the Ramanujan expansion power series
in \eqref{eq:PPC:PowerSeriesSieve} is uniform in $n$,
so the limits may be interchanged to yield:
\begin{equation} \label{eq:PPC:PowerSeriesSieve}
\begin{aligned}
\lim_{N \to \infty}
\lim_{Q \to \infty}
\frac{1}{N}
\sum\limits_{n = 1}^{N}
&\sum\limits_{{q_1}=1}^{Q}
\frac{\mu({q_1})}{\phi({q_1})}
c_{q_1}\left( n \right)
z^{q_1}
\sum\limits_{{q_2}=1}^{Q}
\frac{\mu({q_2})}{\phi({q_2})}
c_{q_2}\left( n + 2 h \right)
z^{q_2}
\\
&=
\lim_{Q \to \infty}
\sum\limits_{{q_1}=1}^{Q}
\sum\limits_{{q_2}=1}^{Q}
\frac{\mu({q_1})}{\phi({q_1})}
\frac{\mu({q_2})}{\phi({q_2})}
z^{\left( q_2 + q_1 \right)}
\left[
\lim_{N \to \infty}
\frac{1}{N}
\sum\limits_{n = 1}^{N}
c_{q_1}\left( n \right)
c_{q_2}\left( n + 2 h \right)
\right]
\end{aligned}
\end{equation}
As listed in \Cref{appendix:Properties:RamanujanSums}:
\begin{equation*}
\lim_{N \to \infty}
\frac{1}{N}
\sum\limits_{n = 1}^{N}
c_{q_1}\left( n \right)
c_{q_2}\left( n + 2 h \right)
=
\begin{cases}
c_{q}\left( 2 h \right) & q = q_1 = q_2
\\
0 & \text{ otherwise}
\end{cases}
\end{equation*}
Applying this mean value to \eqref{eq:PPC:PowerSeriesSieve} yields:
\begin{equation} \label{eq:Theorem8:PowerSeriesConvergence}
\lim_{N \to \infty}
\lim_{Q \to \infty}
\frac{1}{N}
\sum\limits_{n = 1}^{N}
\sum\limits_{{q_1}=1}^{Q}
\frac{\mu({q_1})}{\phi({q_1})}
c_{q_1}\left( n \right)
z^{q_1}
\sum\limits_{{q_2}=1}^{Q}
\frac{\mu({q_2})}{\phi({q_2})}
c_{q_2}\left( n + 2 h \right)
z^{q_2}
=
\sum\limits_{q=1}^{\infty}
\frac{\mu(q)}{\phi(q)}
c_q\left( 2 h \right)
z^{2 q }
\end{equation}
\Cref{eq:Theorem8:PowerSeriesConvergence} is true for all fixed $z$ within the open interval: $0 < z < 1$.
Taking the limit, $z \to 1^-$, and applying \cref{thm:Convergence:RFPowerSeries:Lambda1} yields:
\begin{equation} \label{eq:PPC:Penultimate}
\lim_{N \to \infty}
\frac{1}{N}
\sum\limits_{n = 1}^{N}
\Lambda_1\left( n \right)
\Lambda_1\left( n + 2 h\right)
=
\sum\limits_{q=1}^{\infty}
\frac{\mu(q)}{\phi(q)}
c_q\left( 2 h \right)
\end{equation}

Gayidar and Padma prove in \cite{bibitem:GadiyarPadma:WienerKhinchin} that
\begin{equation*}
\sum\limits_{q=1}^{\infty}
\frac{\mu(q)}{\phi(q)}
c_q\left( 2 h \right)
=
2 C_2
\prod\limits_{\substack{p > 2 \\ p \mid 2 h}}
\left( \frac{p-1}{p-2}\right)
\end{equation*}
Which is the Hardy-Littlewood constant for Conjecture B.

Combining this result with \eqref{eq:PPC:Penultimate} leads to:
\begin{equation*} \label{eq:PPC:HardyLittlewoodConstant}
\lim_{N \to \infty}
\frac{1}{N}
\sum\limits_{n = 1}^{N}
\Lambda_1\left( n \right)
\Lambda_1\left( n + 2 h\right)
=
2 C_2
\prod\limits_{\substack{p > 2 \\ p \mid 2 h}}
\left( \frac{p-1}{p-2}\right)
\end{equation*}
Since
\begin{equation*}
\frac{\phi\left( n \right) }{n} \le 1
\quad
\forall \, n \ge 1
\end{equation*}
the classic sieve for prime pairs has the lower limit given by:
\begin{equation} \label{eq:PPC:ClassicSieve}
\begin{aligned}
0 &<
2 C_2
\prod\limits_{\substack{p > 2 \\ p \mid 2 h}}
\left( \frac{p-1}{p-2}\right)
\\
&=
\lim_{N \to \infty}
\frac{1}{N}
\sum\limits_{n = 1}^{N}
\Lambda_1\left( n \right)
\Lambda_1\left( n + 2 h\right)
\\
\\
&=
\lim_{N \to \infty}
\frac{1}{N}
\sum\limits_{n = 1}^{N}
\left[
\frac{
\phi\left( n \right) 
}{n}
\Lambda\left( n \right)
\right]
\left[
\frac{
\phi\left( n + 2 h \right) 
}{n + 2 h }
\Lambda\left( n + 2 h\right)
\right]
\\
&\le
\lim_{N \to \infty}
\frac{1}{N}
\sum\limits_{n = 1}^{N}
\Lambda\left( n \right)
\Lambda\left( n + 2 h\right)
\end{aligned}
\end{equation}
Thus, \eqref{eq:PPC:ClassicSieve}, simplifies to:
\begin{equation} \label{eq:PPC:ClassicSieve:Final}
0 < 2 C_2
\prod\limits_{\substack{p > 2 \\ p \mid 2 h}}
\left( \frac{p-1}{p-2}\right)
\le
\lim_{N \to \infty}
\frac{1}{N}
\sum\limits_{n = 1}^{N}
\Lambda\left( n \right)
\Lambda\left( n + 2 h\right)
\end{equation}

Because the mean value of the sieve in \eqref{eq:PPC:ClassicSieve:Final} is greater than zero,
\eqref{eq:PPC:ClassicSieve:Final} suffices to demonstrate the number of prime pairs separated by a gap of $2 h$ is infinite.
\end{proof}

\newpage
\section{Proof of Conjecture D of Hardy and Littlewood}

In \cite{bibitem:GadiyarPadma:SophieGermainPrimes}
Gadiyar and Padma tackle the density of Sophie Germain Primes.
In \cite{bibitem:GadiyarPadma:HardyLittleWoodConjecture:D}
Gadiyar and Padma consider Conjecture D of \cite{bibitem:HardyLittlewood01}.
Sophie Germain Primes are prime pairs, $p$ and $p^{\prime}$, such that $p^{\prime} = 2 p + 1$.
Conjucture D from \cite{bibitem:HardyLittlewood01} states that under suitable conditions for $a$, $b$, and $\ell$
there exists an infinite number of  prime pairs, $p$ and $p^{\prime}$,
which are the solution to: $a p^{\prime} - b p = \ell$.
Sophie Germain Primes are a special case of Conjecture D where: $a = 1$, $b=2$, and $\ell=1$.
Twinned primes are a special case of Conjecture D where: $a = 1$, $b=1$, and $\ell=2$.

\begin{theorem} [Conjecture D of \cite{bibitem:HardyLittlewood01}] \label{thm:SophieGermainPrimeAreInfinite}
If
\begin{itemize}
\item $a$, $b$ and $\ell$ are distinct, positive integers,
\item $a$, $b$ and $\ell$ are pair-wise relatively prime; i.e. $(a, b) = 1$, $(a, \ell) = 1$, $(\ell, b) = 1$,
\item one and only one of $a$, $b$, $\ell$ is even, and
\item $P(n)$ is the number of prime pairs, $p$ and $p\prime$,  which are a solution to: $ap - bp\prime = \ell$,
\end{itemize}
Then, there are an infinite number of primes pairs which are the solution to: $ap - bp\prime = \ell$ and
\begin{equation*}
\lim\limits_{n \to \infty}
\frac{ P\left( n \right) }{n}
=
\frac{2 C_2}{a}
\prod\limits_{p} \left( \frac{p-1}{p-2}\right)
\end{equation*}
where the product extends over all odd primes $p$ which divide $a$, $b$ or $\ell$.
\end{theorem}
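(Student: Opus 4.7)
The plan is to run the Ramanujan-Abel argument from the proof of \Cref{thm:PrimePairsAreInfinite} almost verbatim, the only genuinely new ingredient being a linear-form analogue of the orthogonality identity for $\tfrac{1}{N}\sum_{n\le N}c_{q_1}(n)c_{q_2}(n+2h)$ that was quoted from \Cref{appendix:Properties:RamanujanSums}. First I would rewrite the counting problem $ap-bp'=\ell$ in a shape suitable for a mean-value sieve. Parametrizing a solution by an integer $m$, and using the coprimality hypotheses $(a,b)=(a,\ell)=(b,\ell)=1$ together with the parity condition (which together guarantee a local solution modulo every prime), every admissible pair takes the form $p=bm+\alpha$, $p'=am+\beta$ for fixed constants $\alpha,\beta$ determined by $a,b,\ell$. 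The sieve sum to study is therefore
\[
T(N) \;:=\; \frac{1}{N}\sum_{m=1}^{N}\Lambda_{1}(bm+\alpha)\,\Lambda_{1}(am+\beta).
\]
Substituting the Ramanujan-Fourier power series $\Lambda_{1}(Q,z,\cdot)$ of \Cref{def:RamanujanExpansion:PartialSummation} for each factor, and using the obvious adaptation of \Cref{thm:UniformConvergence:RFPowerSeries:Lambda1} (the bound $|c_{q}(bm+\alpha)|\le\phi(q)$ is insensitive to the argument), the same interchange of limits that yields \eqref{eq:PPC:PowerSeriesSieve} reduces the problem to evaluating
\[
M(q_{1},q_{2}) \;:=\; \lim_{N\to\infty}\frac{1}{N}\sum_{m=1}^{N}c_{q_{1}}(bm+\alpha)\,c_{q_{2}}(am+\beta).
\]

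The main technical obstacle is the evaluation of $M(q_{1},q_{2})$ together with the subsequent collapse of the double sum over $q_{1},q_{2}$. Expanding each Ramanujan sum as $c_{q}(r)=\sum_{(k,q)=1}e^{2\pi i k r/q}$ and averaging the resulting geometric series in $m$, only those pairs $(k_{1},k_{2})$ with $k_{1}b/q_{1}+k_{2}a/q_{2}\in\mathbb{Z}$ survive. Writing $d=\gcd(q_{1},q_{2})$ and $q_{j}=d\mu_{j}$ with $(\mu_{1},\mu_{2})=1$, the primitivity $(k_{j},q_{j})=1$ together with $(a,b)=1$ will force $\mu_{1}\mid a$ and $\mu_{2}\mid b$; the surviving terms then repackage, via multiplicative inverses of $a$ and $b$ modulo the diagonal part $d$, into a single Ramanujan sum $c_{d}(\ell^{*})$ evaluated at an explicit shift $\ell^{*}$ built from $\alpha,\beta$ (and hence ultimately from $\ell$), with a bookkeeping factor of $1/a$ arising from the density of admissible $m$'s in the Ces\`aro average.

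Inserting this expression for $M(q_{1},q_{2})$ into the analogue of \eqref{eq:Theorem8:PowerSeriesConvergence} collapses the double sum over $q_{1},q_{2}$ to a single power series in $z$ supported on the sparse locus identified above. Letting $z\to 1^{-}$ and invoking \Cref{thm:Convergence:RFPowerSeries:Lambda1} converts the Abel-summable expression into a convergent series whose Euler product is precisely the one Gadiyar and Padma evaluate in \cite{bibitem:GadiyarPadma:HardyLittleWoodConjecture:D}: the factor $2C_{2}$ comes from the regular $p>2$ Euler factors, the product $\prod_{p\mid ab\ell,\,p>2}\tfrac{p-1}{p-2}$ from the bad primes, and the $1/a$ from the density adjustment noted above. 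Finally, exactly as in \eqref{eq:PPC:ClassicSieve}--\eqref{eq:PPC:ClassicSieve:Final}, the estimate $\phi(n)/n\le 1$ turns $T(N)$ into a lower bound for $\tfrac{1}{N}\sum_{m\le N}\Lambda(bm+\alpha)\Lambda(am+\beta)$; positivity of the limiting constant forces this latter sum to tend to a positive limit, whence $P(n)/n\to\tfrac{2C_{2}}{a}\prod_{p}\tfrac{p-1}{p-2}$ with the product taken over odd primes dividing $a$, $b$, or $\ell$, and in particular infinitely many prime pairs $(p,p')$ with $ap-bp'=\ell$ exist.
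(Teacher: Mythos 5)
Your proposal follows essentially the same route as the paper's proof: regularize each factor by the Ramanujan expansion power series, interchange the $N$- and $Q$-limits via the uniform convergence of Theorem \ref{thm:UniformConvergence:RFPowerSeries:Lambda1}, reduce to a bilinear mean value of Ramanujan sums along the two linear forms, let $z \to 1^-$, and quote Gadiyar--Padma for the evaluation of the resulting singular series before concluding via positivity and $\phi(n)/n \le 1$. The only difference is cosmetic: you substitute the explicit parametrization $p = bm + \alpha$, $p' = am + \beta$, whereas the paper keeps the variable $n$ and detects the divisibility condition $a \mid (bn+\ell)$ with the character sum $\frac{1}{a}\sum_{j=0}^{a-1} e^{2\pi i (bn+\ell)j/a}$ --- the same restriction, producing the same $1/a$ density factor and the same surviving-frequency condition you describe.
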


\begin{proof}

The proof here follows the heuristic argument found in
\cite{bibitem:GadiyarPadma:SophieGermainPrimes} and
\cite{bibitem:GadiyarPadma:HardyLittleWoodConjecture:D},
but begins with the Ramanujan expansion power series,
\begin{equation*}
\Lambda_1\left( z, x \right) 
= \sum\limits_{q=1}^{\infty}
\frac{\mu\left( q \right)}{\phi\left( q \right)}
c_{q} \left( x \right)
z^{q}
\end{equation*}
rather than the R-F expansions,
\begin{equation*}
\Lambda_1\left( n \right) 
= \sum\limits_{q=1}^{\infty}
\frac{\mu\left( q \right)}{\phi\left( q \right)}
c_{q} \left( n \right)
\end{equation*}
used in 
\cite{bibitem:GadiyarPadma:SophieGermainPrimes} and
\cite{bibitem:GadiyarPadma:HardyLittleWoodConjecture:D}.

Begining with:
\begin{equation} \label{eq:ConjectureD:StartingPoint}
\begin{aligned}
\lim_{N \to \infty}
&\frac{1}{N}
\sum\limits_{\substack{n = 1 \\ a \mid \left( b n + \ell \right)}}^{N}
\Lambda_1\left( z, n \right)
\Lambda_1\left( z, \frac{b n + \ell}{a} \right)
\\
&=
\lim_{N \to \infty}
\frac{1}{N}
\sum\limits_{n = 1}^{N}
\left(
\sum\limits_{q_1=1}^{\infty}
\frac{\mu\left( q_1 \right)}{\phi\left( q_1 \right)}
c_{q_1} \left( n \right)
z^{q_1}
\right)
\left(
\sum\limits_{q_2=1}^{\infty}
\frac{\mu\left( q_2 \right)}{\phi\left( q_2 \right)}
c_{q_2} \left( \frac{b n + \ell}{a} \right)
z^{q_2}
\right)
\left(
\frac{1}{a}
\sum\limits_{j = 0}^{a-1}
e^{2 \pi i \left( \frac{b n + \ell }{a} \right)  j }
\right)
\end{aligned}
\end{equation}
where:
$c_{q_2} \left( n \right)$ and
$c_{q_2} \left( \frac{b n + \ell}{a} \right)$
are the real-valued version of the Ramanujan sums
and the last term:
\begin{equation*}
\frac{1}{a}
\sum\limits_{j = 0}^{a-1}
e^{2 \pi i \left( \frac{b n + \ell }{a} \right)   j }
=
\begin{cases}
1 & a \mid \left( b n + \ell \right)
\\
0 & \text{otherwise}
\end{cases}
\end{equation*}
limits the arguments of the summands to integer values.

By \Cref{thm:UniformConvergence:RFPowerSeries:Lambda1} the 
sequence of partial summations of the Ramanujan expansion power series 
converges unifomly in $x$,
so the limits found on the RHS of \Cref{eq:ConjectureD:StartingPoint},
$N \to \infty$ and $Q \to \infty$,
may be interchanged to yield:
\begin{equation} \label{eq:Theorem9:InterchangedLimits}
\begin{aligned}
\lim_{N \to \infty}
\frac{1}{N}
\sum\limits_{n = 1}^{N}
&\left(
\sum\limits_{q_1=1}^{\infty}
\frac{\mu\left( q_1 \right)}{\phi\left( q_1 \right)}
c_{q_1} \left( n \right)
z^{q_1}
\right)
\left(
\sum\limits_{q_2=1}^{\infty}
\frac{\mu\left( q_2 \right)}{\phi\left( q_2 \right)}
c_{q_2} \left( \frac{b n + \ell}{a} \right)
z^{q_2}
\right)
\left(
\frac{1}{a}
\sum\limits_{j = 0}^{a-1}
e^{2 \pi i \left( \frac{b n + \ell }{a} \right)  j }
\right)
\\
&=
\frac{1}{a}
\sum\limits_{j = 0}^{a-1}
\sum\limits_{q_1=1}^{\infty}
\sum\limits_{q_2=1}^{\infty}
\frac{\mu\left( q_1 \right)}{\phi\left( q_1 \right)}
\frac{\mu\left( q_2 \right)}{\phi\left( q_2 \right)}
z^{\left( q_1 + q_2 \right) }
\left[
\lim_{N \to \infty}
\frac{1}{N}
\sum\limits_{n = 1}^{N}
e^{2 \pi i \left( \frac{b n + \ell}{a} \right) j }
c_{q_1} \left( n \right)
c_{q_2} \left( \frac{b n + \ell}{a} \right)
\right]
\end{aligned}
\end{equation}
Using the definition of $c_q(x)$ and the fact that $c_q(x)$ is an even function,
the expression in square brackets of \eqref{eq:Theorem9:InterchangedLimits}
can be expanded to:
\begin{equation} \label{eq:ConjectureD:GadiyarPadma:3.5}
\lim_{N \to \infty}
\frac{1}{N}
\sum\limits_{n = 1}^{N}
e^{ 2 \pi i \left( \frac{b n + \ell }{a} \right)  j }
\sum\limits_{\substack{k_1 = 1 \\ (k_1, q_1) = 1}}^{q_1}
e^{-2 \pi i \frac{k_1}{q_1} n}
\sum\limits_{\substack{k_2 = 1 \\ (k_2, q_2) = 1}}^{q_2}
e^{ 2 \pi i \frac{k_2}{q_2} \left( \frac{b n + \ell}{a}  \right) }
\end{equation}
Using \eqref{eq:ConjectureD:GadiyarPadma:3.5} and separating out for $n$, \cref{eq:ConjectureD:StartingPoint} transforms into:
\begin{equation} \label{eq:ConjectureD:GadiyarPadma:3.6}
\begin{aligned}
&\lim_{N \to \infty}
\frac{1}{N}
\sum\limits_{n = 1}^{N}
\left(
\sum\limits_{q_1=1}^{\infty}
\frac{\mu\left( q_1 \right)}{\phi\left( q_1 \right)}
c_{q_1} \left( n \right)
z^{q_1}
\right)
\left(
\sum\limits_{q_2=1}^{\infty}
\frac{\mu\left( q_2 \right)}{\phi\left( q_2 \right)}
c_{q_2} \left( \frac{b n + \ell}{a} \right)
z^{q_2}
\right)
\left(
\frac{1}{a}
\sum\limits_{j = 0}^{a-1} e^{2 \pi i \left( \frac{b n + \ell }{a} \right)  j }
\right)
\\
&=
\frac{1}{a}
\sum\limits_{j = 0}^{a-1}
\sum\limits_{q_1=1}^{\infty}
\sum\limits_{q_2=1}^{\infty}
\sum\limits_{\substack{k_1 = 1 \\ (k_1, q_1) = 1}}^{q_1}
\sum\limits_{\substack{k_2 = 1 \\ (k_2, q_2) = 1}}^{q_2}
\frac{\mu\left( q_1 \right)}{\phi\left( q_1 \right)}
\frac{\mu\left( q_2 \right)}{\phi\left( q_2 \right)}
e^{2 \pi i \left( \frac{k_2}{q_2} + j \right) \frac{\ell }{a} }
z^{\left( q_1 + q_2 \right) }
\left[
\lim_{N \to \infty}
\frac{1}{N}
\sum\limits_{n = 1}^{N}
e^{2 \pi i
\left(
\frac{b}{a} \left( \frac{k_2}{q_2} + j \right)
- \frac{k_1}{q_1}
\right)  n
}
\right]
\end{aligned}
\end{equation}

Gadiyar and Padma proved in
\cite{bibitem:GadiyarPadma:SophieGermainPrimes} and
\cite{bibitem:GadiyarPadma:HardyLittleWoodConjecture:D},
that:
\begin{equation*}
\lim_{N \to \infty}
\frac{1}{N}
\sum\limits_{n = 1}^{N}
e^{2 \pi i
\left(
\left( \frac{k_2}{q_2} + j \right)\left( \frac{b}{a} \right)
- \frac{k_1}{q_1}
\right)  n
}
=
\begin{cases}
1 & \frac{k_1}{q_1} = \left( \frac{k_2}{q_2} + j \right)\left( \frac{b}{a} \right)
\\
0 & \text{otherwise}
\end{cases}
\end{equation*}
With this,
\eqref{eq:ConjectureD:GadiyarPadma:3.6} becomes:
\begin{equation} \label{eq:ConjectureD:GadiyarPadma:3.7a}
\begin{aligned}
&\lim_{N \to \infty}
\frac{1}{N}
\sum\limits_{n = 1}^{N}
\left(
\sum\limits_{q_1=1}^{\infty}
\frac{\mu\left( q_1 \right)}{\phi\left( q_1 \right)}
c_{q_1} \left( n \right)
z^{q_1}
\right)
\left(
\sum\limits_{q_2=1}^{\infty}
\frac{\mu\left( q_2 \right)}{\phi\left( q_2 \right)}
c_{q_2} \left( \frac{b n + \ell}{a} \right)
z^{q_2}
\right)
\left(
\frac{1}{a}
\sum\limits_{j = 0}^{a-1}
e^{2 \pi i \left( \frac{b n + \ell }{a} \right)  j }
\right)
\\
&=
\frac{1}{a}
\sum\limits_{j = 0}^{a-1}
\sum\limits_{q_1=1}^{\infty}
\sum\limits_{q_2=1}^{\infty}
\sum\limits_{\substack{k_1 = 1 \\ (k_1, q_1) = 1}}^{q_1}
\sum\limits_{
\substack{
			 k_2 = 1
		\\ (k_2, q_2) = 1
		\\ \frac{k_1}{q_1} = \left( \frac{k_2}{q_2} + j \right) \frac{b}{a}
	}
}^{q_2}
\frac{\mu\left( q_1 \right)}{\phi\left( q_1 \right)}
\frac{\mu\left( q_2 \right)}{\phi\left( q_2 \right)}
e^{2 \pi i \left( \frac{k_2}{q_2} + j \right) \frac{\ell }{a} }
z^{\left( q_1 + q_2 \right) }
\end{aligned}
\end{equation}
Taking the limit, $z \to 1^-$, yields:
\begin{equation} \label{eq:ConjectureD:GadiyarPadma:3.7}
\begin{aligned}
&\lim_{N \to \infty}
\frac{1}{N}
\sum\limits_{\substack{n = 1 \\ a \mid \left( b n + \ell \right)}}^{N}
\Lambda_1\left( n \right)
\Lambda_1\left( \frac{b n + \ell}{a} \right)
&=
\frac{1}{a}
\sum\limits_{j = 0}^{a-1}
\sum\limits_{q_1=1}^{\infty}
\sum\limits_{q_2=1}^{\infty}
\sum\limits_{\substack{k_1 = 1 \\ (k_1, q_1) = 1}}^{q_1}
\sum\limits_{
\substack{
			 k_2 = 1
		\\ (k_2, q_2) = 1
		\\ \frac{k_1}{q_1} = \left( \frac{k_2}{q_2} + j \right) \frac{b}{a}
	}
}^{q_2}
\frac{\mu\left( q_1 \right)}{\phi\left( q_1 \right)}
\frac{\mu\left( q_2 \right)}{\phi\left( q_2 \right)}
e^{2 \pi i \left( \frac{k_2}{q_2} + j \right) \frac{\ell }{a} }
\end{aligned}
\end{equation}
Gadiyar and Padma proved in
\cite{bibitem:GadiyarPadma:SophieGermainPrimes} and
\cite{bibitem:GadiyarPadma:HardyLittleWoodConjecture:D},
that the right hand side of \eqref{eq:ConjectureD:GadiyarPadma:3.7} is:
\begin{equation*}
\frac{1}{a}
\sum\limits_{j = 0}^{a-1}
\sum\limits_{q_1=1}^{\infty}
\sum\limits_{q_2=1}^{\infty}
\sum\limits_{\substack{k_1 = 1 \\ (k_1, q_1) = 1}}^{q_1}
\sum\limits_{
\substack{
			 k_2 = 1
		\\ (k_2, q_2) = 1
		\\ \frac{k_1}{q_1} = \left( \frac{k_2}{q_2} + j \right) \frac{b}{a}
	}
}^{q_2}
\frac{\mu\left( q_1 \right)}{\phi\left( q_1 \right)}
\frac{\mu\left( q_2 \right)}{\phi\left( q_2 \right)}
e^{2 \pi i \left( \frac{k_2}{q_2} + j \right) \frac{\ell }{a} }
=
\frac{2 C_2}{a}
\prod\limits_{p} \left( \frac{p-1}{p-2}\right)
\end{equation*}
where the product extends over all odd primes $p$ which divide $a$, $b$ or $\ell$.

Since:
\begin{equation} \label{eq:ConjectureD:GadiyarPadma:3.9}
\begin{aligned}
0 &< \frac{2 C_2}{a}
\prod\limits_{p} \left( \frac{p-1}{p-2}\right)
\\
&=
\lim_{N \to \infty}
\frac{1}{N}
\sum\limits_{\substack{n = 1 \\ a \mid \left( b n + \ell \right)}}^{N}
\Lambda_1\left( n \right)
\Lambda_1\left( \frac{b n + \ell}{a} \right)
\\
&=
\lim_{N \to \infty}
\frac{1}{N}
\sum\limits_{\substack{n = 1 \\ a \mid \left( b n + \ell \right)}}^{N}
\left[
\frac{\phi\left( n \right)}{n}
\Lambda\left( n \right)
\right]
\left[
\frac{\phi\left( \frac{b n + \ell}{a}  \right)}{\frac{b n + \ell}{a} }
\Lambda\left( \frac{b n + \ell}{a} \right)
\right]
\\
&\le
\lim_{N \to \infty}
\frac{1}{N}
\sum\limits_{\substack{n = 1 \\ a \mid \left( b n + \ell \right)}}^{N}
\Lambda\left( n \right)
\Lambda\left( \frac{b n + \ell}{a} \right)
\end{aligned}
\end{equation}
Thus, \cref{eq:ConjectureD:GadiyarPadma:3.9} simplifies to:
\begin{equation} \label{eq:ConjectureD:GadiyarPadma:3.10}
0 < 
\frac{2 C_2}{a}
\prod\limits_{p} \left( \frac{p-1}{p-2}\right)
\le
\lim_{N \to \infty}
\frac{1}{N}
\sum\limits_{\substack{n = 1 \\ a \mid \left( b n + \ell \right)}}^{N}
\Lambda\left( n \right)
\Lambda\left( \frac{b n + \ell}{a} \right)
\end{equation}
Since the mean value found in \eqref{eq:ConjectureD:GadiyarPadma:3.10} is greater than zero there 
must be an infinite number of prime pairs which are solutions to $a p - b p^{\prime} = \ell$
\end{proof}

\newpage
\section{Proof of Prime Number Theorem}
Since new tools should be able to solve old problems, in this section the
Ramanujan-Abel method is applied to the prime number theorem.
The result:
\begin{equation*}
\lim_{x \to \infty}
\frac{1}{x}
\sum\limits_{n \le x}
\Lambda_1\left( n \right)
= 1
\end{equation*}
is equivalent to the prime number theorem.

\begin{proof}
Without loss of generality $x$ can be limited to integers.
Starting with:
\begin{equation} \label{eq:PNT:RamanujanAbel:Start}
\lim_{N \to \infty}
\frac{1}{N}
\sum\limits_{n = 1}^{N}
\sum\limits_{q = 1}^{\infty}
\frac{\mu\left( q \right)}{\phi\left( q \right)}
c_q\left( n \right)
z^q
=
\sum\limits_{q = 1}^{\infty}
\frac{\mu\left( q \right)}{\phi\left( q \right)}
z^q
\left[
\lim_{N \to \infty}
\frac{1}{N}
\sum\limits_{n = 1}^{N}
c_q\left( n \right)
\right]
\end{equation}
and justifying the interchange of limits because of the uniform convergence
established by \cref{thm:UniformConvergence:RFPowerSeries:Lambda1}.
From \Cref{appendix:Properties:RamanujanSums}:
\begin{equation*}
\lim_{N \to \infty}
\frac{1}{N}
\sum\limits_{n = 1}^{N}
c_q\left( n \right)
=
\begin{cases}
1 & q = 1
\\
0 & \text{otherwise}
\end{cases}
\end{equation*}
so \eqref{eq:PNT:RamanujanAbel:Start} becomes,
\begin{equation} \label{eq:PNT:RamanujanAbel:Penultimate}
\begin{aligned}
\lim_{N \to \infty}
\frac{1}{N}
\sum\limits_{n = 1}^{N}
\sum\limits_{q = 1}^{\infty}
\frac{\mu\left( q \right)}{\phi\left( q \right)}
c_q\left( n \right)
z^q
&=
\sum\limits_{q = 1}^{1}
\frac{\mu\left( q \right)}{\phi\left( q \right)}
c_q\left( n \right)
z^q
\\
&=
\frac{\mu\left( 1 \right)}{\phi\left( 1 \right)}
z^1
\\
&=
z
\end{aligned}
\end{equation}
Applying the limit, $z \to 1^-$, to \eqref{eq:PNT:RamanujanAbel:Penultimate} yields:
\begin{equation*}
\lim_{N \to \infty}
\frac{1}{N}
\sum\limits_{n = 1}^{N}
\Lambda_1\left( n \right)
= 1
\end{equation*}

\end{proof}

\newpage
\section{The Parity Problem}
The deep connection between Ramanujan sums, $c_q\left( n \right)$,
and the circle method are discussed in
\cite{bibitem:GadiyarPadma:LinkingCircleAndSieve} and
\cite{bibitem:GadiyarPadma:RotaMeetsRamanujan}.
Using Ramanujan-Fourier expansions of arithmetic functions within a sieve
places the Ramanujan sums with a foot in both worlds.
As noted by Hardy, Ramanujan sums
\textit{
have a peculiar interest because they show explicitly the source of the irregularities in the behaviour of their sums.
Thus, for example
\begin{equation*}
\sigma\left( n \right) = \frac{\pi^2 n}{6}
\left(
1
+ \frac{\left( -1 \right)^n }{2^2}
+ \frac{2 \cos\left( \frac{2}{3} \right) n \pi}{3^2}
+ \frac{2 \cos\left( \frac{1}{2} \right) n \pi }{4^2}
+ \cdots
\right)
\end{equation*}
and we see at once that the most important term in $\sigma(n)$ is $\frac{1}{6} \pi^2 n$, and that irregular
variations about this average value are produced by a series of harmonic oscillations of
decreasing amplitude.
}

As pointed out in \cite{bibitem:GadiyarPadma:WienerKhinchin}: \textit{
The Ramanujan-Fourier series trap the vagaries in the behaviour
of primes and the Wiener-Khintchine formula traps their correlation properties
}

The Wiener-Khintchine formula is specifically designed to study signals and other processes with chaotic features resembling Brownian motion.
The distribution of primes and twin primes seem to have this kind of chaotic structure,
so, it is reasonable to expect the Weiner-Khintchine approach will avoid the brunt of  the parity problem because it takes
advantage of the smoothing introduced by the convolution process (i.e. the mean-value process).
Weiner-Khintchine  is specifically designed to provide information on the convolution of signals which are
too ``rough'' individually have a Fourier transform of their own.
Sieving methods tend to accumulate local errors and, generally, do not take advantage of any the global smoothing or
widely separated cancellations which may be introduced by the convolution (mean-value) process.
The Wiener-Khintchine theorem does take advantage of such global  cancellations.

Because of the significance of the rational points of the unit circle within both the circle method and the Ramanujan sums,
Ramanujan-Fourier expansions capture the probabilistic elements of a sieve while retaining the rigor of the circle method.

\newpage
\section{Further Directions}
The Ramanujan-Abel method provides a robust framework in which to investigate
the distribution of primes and prime m-tuples of a given algebraic form.

\subsection{Proof of the m-tuple Conjecture}
In \cite{bibitem:GadiyarPadma:HardyLittleWoodConjecture:D} Gadiyar and Padma provide the heuristic argument for the result:
\begin{theorem}[Conjecture $X_1$ of of Hardy and Littlewood]
Let
$a_1$, $a_2$, $\cdots$, $a_{m}$,
be distinct integers such that,
$n, \left( n + a_1 \right)$, $\left( n + a_2 \right)$, $\cdots$, $\left( n + a_{m} \right)$,
meet the criteria of \Cref{def:BunyakovskyCondition:ChebyshevLikeFunction}
Then:
\begin{equation*}
\lim_{N \to \infty}
\frac{1}{N}
\sum\limits_{n=1}^{N}
\Lambda\left( n \right)
\Lambda\left( n + a_1 \right)
\Lambda\left( n + a_2 \right)
\cdots
\Lambda\left( n + a_m \right)
=
\prod\limits_{p}
\left( \frac{p}{p - 1} \right)^{m}
\frac{p-\nu}{p - 1}
\end{equation*}
where $\nu = \nu_{r} = \nu\left( p;\text{ } 0, a_1, a_2, \cdots, a_{m}\right)$
is the number of distinct residues of
$a_1$, $a_2$, $\cdots$, $a_{m}$ to modulus $p$
\end{theorem}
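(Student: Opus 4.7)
The plan is to extend the Ramanujan-Abel template used for Conjectures B and D to an $(m+1)$-fold product. Setting $a_0 = 0$, I would form the mean
$$\lim_{N\to\infty}\frac{1}{N}\sum_{n=1}^{N}\prod_{j=0}^{m}\Lambda_1(z, n+a_j),$$
expand each factor as its Ramanujan expansion power series, and invoke the corollary to the uniform convergence theorem on each factor. Since a product of finitely many sequences converging uniformly to uniformly bounded limits still converges uniformly, the limits $N\to\infty$ and $Q_0,\ldots,Q_m\to\infty$ may be interchanged. The problem reduces to computing, for each $(m+1)$-tuple $(q_0,\ldots,q_m)$, the mean
$$M(q_0,\ldots,q_m) = \lim_{N\to\infty}\frac{1}{N}\sum_{n=1}^{N}\prod_{j=0}^{m} c_{q_j}(n+a_j).$$

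The key new ingredient is an $(m+1)$-fold orthogonality relation generalizing the two-factor identity in the Appendix. Expanding each $c_{q_j}(n+a_j)$ as a sum over primitive residues $k_j$ modulo $q_j$ and averaging over $n$ selects only tuples with $\sum_{j=0}^{m} k_j/q_j \in \mathbb{Z}$. I would show, by Chinese-Remainder decomposition into prime powers and local analysis at each prime $p$, that the surviving contributions collapse onto the diagonal $q_0 = q_1 = \cdots = q_m = q$ and give a multiplicative function of $q$. After this reduction, the inner power series becomes
$$\sum_{q=1}^{\infty}\frac{\mu(q)^{m+1}}{\phi(q)^{m+1}} S(q; a_1,\ldots,a_m)\, z^{(m+1)q},$$
where $S(q;\cdot)$ is a singular exponential sum in the shifts.

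Next I would apply Abel's theorem, relying on the point-wise convergence established for $\Lambda_1(z, n)$, to pass to $z \to 1^-$ and obtain
$$\lim_{N\to\infty}\frac{1}{N}\sum_{n=1}^{N}\prod_{j=0}^{m}\Lambda_1(n+a_j) = \sum_{q=1}^{\infty}\frac{\mu(q)^{m+1}}{\phi(q)^{m+1}} S(q; a_1,\ldots,a_m).$$
Multiplicativity in $q$ expresses the right-hand side as an Euler product. The Bunyakovsky condition ensures that for every prime $p$ the local factor is nonzero, and counting the distinct residues of $\{0, a_1,\ldots,a_m\}$ modulo $p$ yields the local factor $(p-\nu)/(p-1)$, with the companion $\bigl(p/(p-1)\bigr)^m$ produced by the normalizing $\phi(q)^{-(m+1)}$. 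The final step mirrors the conclusion of the proofs for Conjectures B and D: using $\phi(n)/n \leq 1$ termwise gives
$$0 < \prod_p \left(\frac{p}{p-1}\right)^m \frac{p-\nu}{p-1} \leq \lim_{N\to\infty}\frac{1}{N}\sum_{n=1}^{N}\prod_{j=0}^{m}\Lambda(n+a_j),$$
establishing positivity of the Hardy-Littlewood constant and hence infinitude of the corresponding prime $(m+1)$-tuples.

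The hard part will be the $(m+1)$-fold orthogonality step: reducing the tuple sum constrained by $\sum k_j/q_j \in \mathbb{Z}$ to a single diagonal index $q$, and then evaluating the resulting multiplicative sum. The two-factor case is clean because $k_1/q_1 + k_2/q_2 \in \mathbb{Z}$ with $(k_i,q_i)=1$ forces $q_1=q_2$ immediately; with $m+1$ factors and generic shifts $a_j$, the linear constraint admits many off-diagonal tuples, and showing that their contributions all cancel (under the Bunyakovsky hypothesis) is the principal technical obstacle. Evaluating the resulting singular series as the claimed Euler product, while standard in the Gadiyar-Padma works for $m \le 1$, must also be carried out uniformly in $m$.
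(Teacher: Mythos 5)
The first thing to say is that the paper contains no proof of this statement for you to be measured against. The theorem sits in the ``Further Directions'' section; the paper's entire argument is the sentence that the Ramanujan--Abel method ``proves this heuristic result is correct,'' followed almost immediately by the admission that confirming the conjectures in \Cref{tbl:HardyLittlewoodConjectures} ``requires a better understanding'' of exactly the mean values
\begin{equation*}
\lim_{N \to \infty}\frac{1}{N}\sum_{n=1}^{N} c_{q_1}(n+b_1)\,c_{q_2}(n+b_2)\cdots c_{q_m}(n+b_m)
\end{equation*}
listed in \Cref{tbl:RamunjanSums:HardyLittlewoodConjectures}, with a pointer to T\'oth's paper as only a beginning. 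So your outline is already more explicit than the source, and the step you single out as ``the principal technical obstacle'' is precisely the step the paper concedes is open.

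That obstacle is a genuine gap, and your proposed resolution of it is not merely incomplete but wrong in its main claim. For two factors, $k_1/q_1+k_2/q_2\in\mathbb{Z}$ with $(k_j,q_j)=1$ does force $q_1=q_2$; for three or more factors the analogous constraint does not force equal moduli --- take $q_0=2$, $q_1=3$, $q_2=6$ and $k_0=k_1=k_2=1$, so that $\tfrac12+\tfrac13+\tfrac16=1\in\mathbb{Z}$. These off-diagonal tuples do not cancel under the Bunyakovsky hypothesis or otherwise: they contribute essentially to the singular series, which for $m\ge 2$ is an honest multi-index sum over $(q_0,\dots,q_m)$ (multiplicative in the joint prime-power structure of the tuple, not supported on a diagonal). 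Consequently the intermediate object $\sum_q \mu(q)^{m+1}\phi(q)^{-(m+1)}S(q;a_1,\dots,a_m)\,z^{(m+1)q}$ is not the correct reduction, and the Euler-product evaluation built on it does not go through as written. On top of this, the rearrangement of the resulting $(m+1)$-fold conditionally convergent sum and the passage $z\to 1^-$ through $\lim_{N\to\infty}$ would each need independent justification. Until the multi-factor mean value is actually computed and the resulting multi-index series evaluated, the argument establishes nothing beyond $m=1$; you have correctly located the difficulty but not overcome it, and neither has the paper.
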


Using the Ramanujan-Abel method
proves this heuristic result is correct.
The use of the Ramanujan expansion power series and the uniform convergence thereof
provides for the interchange of limits needed to
reach the result found in \cite{bibitem:GadiyarPadma:HardyLittleWoodConjecture:D}

\subsection{Nearly-Square Primes and Other Conjectures}
There are several more conjectures in \cite{bibitem:HardyLittlewood01}
which deal with the density of primes of various algebraic forms
and with the density of prime pairs and prime $m$-tuplets of various algebraic forms.
These conjectures include:

\begin{table}[tbhp]
\caption{Hardy-Littlewood Conjectures}
\label{tbl:HardyLittlewoodConjectures}
\begin{tabular}{ | l  l  l |}
\hline
Conjecture E & primes of the form $m^2 + 1$ are infinite  & page 48\\
Conjecture F & primes of the form $a m^2 + b m + c$ are infinite  & page 48\\
Conjecture G & prime pairs of the form $p, p^{\prime} = a m^2 + b m + p$ are infinite  & page 49\\
Conjecture J & prime pairs of the form $p, p^{\prime} = m_1^2 + m_2^2 + p$ are infinite  & page 50\\
Conjecture J & prime pairs of the form $p, p^{\prime} = m_1^2 + m_2^2 + m_3^2 + m_4^2 + p$ are infinite  & page 50\\
Conjecture K & primes of the form $m^3 + k$ are infinite  & page 50 \\
Conjecture M & primes of the form $\ell^3 + m^3 + k$ are infinite  & pages 51-2\\
Conjecture N & primes of the form $\ell^3 + m^3 + k^3$ are infinite  & page 52 \\
Conjecture P & prime pairs of the form $m^2 + 1, m^2 + 3$ are infinite  & page 62 \\
Conjecture $X_1$ & prime $m$-tuplets of the form $n + b_1, n + b_2, \cdots n + b_m, $ are infinite  & pages 52-61 \\
\hline
\begin{tabular}{@{}l@{}}
Bateman-Horn \\
Conjecture
\end{tabular}
&
\begin{tabular}{@{}l@{}}
For any Bunyakovsky set, there exists
\\ an infinite number of solutions, $n$, such that
\\ $f_1(n), f_2(n), \cdots, f_m(n),$ are all prime.
\end{tabular}
&
\begin{tabular}{@{}l@{}}
   \cite{bibitem:Baier:Bateman-Horn}, \cite{bibitem:Bateman-Horn01},
\\ and \cite{bibitem:Conrad:HardyLittlewoodConstants}
\end{tabular}
\\
\hline
\end{tabular}
\end{table}

The author believes the Ramanujan-Able method will prove useful in confirming the conjectures
of \Cref{tbl:HardyLittlewoodConjectures},
but, it requires a better understanding of expressions found in \Cref{tbl:RamunjanSums:HardyLittlewoodConjectures}.

\begin{table}[tbhp]
\caption{Ramanujan Sums for the Various Conjectures}
\label{tbl:RamunjanSums:HardyLittlewoodConjectures}
\begin{tabular}{ | l  l |}  
\hline
Conjecture E
&
$ 
\lim\limits_{N \to \infty}
\frac{1}{N}
\sum\limits_{n = 1}^{N}
c_q\left( n^2 + 1 \right)
$ 
\\
Conjecture F
&
$ 
\lim\limits_{N \to \infty}
\frac{1}{N}
\sum\limits_{n = 1}^{N}
c_q\left( a n^2 + b n + c \right)
$ 
\\
Conjecture G
&
$ 
\lim\limits_{N \to \infty}
\frac{1}{N}
\sum\limits_{n = 1}^{N}
c_{q_1}\left( n \right)
c_{q_2}\left( a m^2 + b m + n \right)
$ 
\\
Conjecture J
&
$ 
\lim\limits_{N \to \infty}
\frac{1}{N}
\sum\limits_{n = 1}^{N}
c_{q_1}\left( n \right)
c_{q_2}\left( m_1^2 + m_2^2 + n \right)
$ 
\\
Conjecture J
&
$ 
\lim\limits_{N \to \infty}
\frac{1}{N}
\sum\limits_{n = 1}^{N}
c_{q_1}\left( n \right)
c_{q_2}\left( m_1^2 + m_2^2 + m_3^2 + m_4^2 + n \right)
$ 
\\
Conjecture K
&
$ 
\lim\limits_{N \to \infty}
\frac{1}{N}
\sum\limits_{n = 1}^{N}
c_q\left( n^3 + k \right)
$ 
\\
Conjecture M
&
$ 
\lim\limits_{N \to \infty}
\frac{1}{N}
\sum\limits_{n = 1}^{N}
c_q\left( n^3 + m^3 + k \right)
$ 
\\
Conjecture N
&
$ 
\lim\limits_{N \to \infty}
\frac{1}{N}
\sum\limits_{n = 1}^{N}
c_q\left( n^3 + m^3 + k^3 \right)
$ 
\\
Conjecture P
&
$ 
\lim\limits_{N \to \infty}
\frac{1}{N}
\sum\limits_{n = 1}^{N}
c_{q_1}\left( n^3 + 1 \right)
c_{q_2}\left( n^3 + 3 \right)
$ 
\\
Conjecture $X_1$
&
$ 
\lim\limits_{N \to \infty}
\frac{1}{N}
\sum\limits_{n = 1}^{N}
c_{q_1}\left( n + b_1 \right)
c_{q_2}\left( n + b_2 \right)
\cdots
c_{q_m}\left( n + b_m \right)
$ 
\\
\begin{tabular}{@{}l@{}}
Bateman-Horn \\
Conjecture
\end{tabular}
&
$ 
\lim\limits_{N \to \infty}
\frac{1}{N}
\sum\limits_{n = 1}^{N}
c_{q_1}\left( f_1\left( n \right) \right)
c_{q_2}\left( f_2\left( n \right) \right)
\cdots
c_{q_m}\left( f_m\left( n \right) \right)
$ 
\\
\hline
\end{tabular}
\end{table}

With the function, $E_g\left(m_1, \cdots ,m_r)\right)$,
T\'oth \cite{bibitem:Toth:01} begins to explore these expressions,
but much more research needs to be done in this area.

\subsection{Other Ramanujan-Fourier Expansions}
Currently, the most commonly cited Ramanujan-Fourier expansions
for arithmetic functions are:
\begin{align}
d\left( n \right) &=
-\sum\limits_{q=1}^{\infty}
\frac{\log{q}}{q} c_q\left( n \right)
\\
\sigma\left( n \right) &=
\frac{\pi^2 n}{6}
\sum\limits_{q=1}^{\infty}
\frac{c_q(n)}{q^2}
\\
\Lambda_1\left( n \right) &=
\sum\limits_{q=1}^{\infty}
\frac{\mu(q)}{\phi(q)}
c_q(n)
\\
r\left( a \right) &=
\pi
\sum\limits_{q=1}^{\infty}
\frac{
\left( -1 \right)^{q - 1}
}{
2 q - 1
}
c_{2 q - 1}\left( a \right)
\end{align}
Where:
\begin{itemize}
\item $d\left( n \right)$ is the divisor function,
\item $\sigma\left( n \right)$ is the sum of divisors function,
\item $\Lambda_1\left( n \right)$ is the modified von Mangodlt function
\item $r\left( a \right)$ is the number of integer lattice points, $\left( u, v \right)$, within the circle: $u^2 + v^2 \le a$.
\end{itemize}

Are there other arithmetic functions used in number theory for which it
would useful to have a Ramanujan-Fourier expansion?
For example, does the prime indicator function:
\begin{align*}
\varepsilon\left( n \right)
&=
\frac{
\mu\left( n \right)
\Lambda\left( n \right)
}{
\log{n}
}
\\
&=
\begin{cases}
-1 & n \text{ is prime}
\\
0 & n \text{ is composite}
\end{cases}
\end{align*}
have a Ramanujan-Fourier expansion of the form?
\begin{equation*}
\varepsilon\left( n \right)
=
\sum\limits_{q=1}^{\infty}
{\widehat{\varepsilon}}_q
c_q\left( n \right)
\end{equation*}

\subsection{Other Questions in Additive Number Theory}
The mean-value properties listed in \Cref{appendix:Properties:RamanujanSums}
take advantage of the fact the $N$ tends to $\infty$.  There are
several conjectures within \cite{bibitem:HardyLittlewood01} where the sieve under
consideration does not range over all natural numbers.
For example, Conjecture A is the strong Goldbach conjecture,
where the Ramanujan-Fourier expression under consideration is:
\begin{equation} \label{eq:StrongGoldbachSieve}
\begin{aligned}
\sum\limits_{n = 1}^{2 N}
&\Lambda_1\left( n \right)
\Lambda_1\left(  N - n \right)
\\
&=
\lim\limits_{r \to 1^-}
\sum\limits_{n = 1}^{2 N}
\sum\limits_{q_1=1}^{\infty}
\frac{\mu(q_1)}{\phi(q_1)}
c_{q_1}(n)
r^{q_1}
\sum\limits_{q_2=1}^{\infty}
\frac{\mu(q_2)}{\phi(q_2)}
c_{q_2}(2 N - n)
r^{q_2}
\end{aligned}
\end{equation}
Even with an interchange of limits transforming  the right hand side of
\eqref{eq:StrongGoldbachSieve} to:
\begin{equation*}
\lim\limits_{r \to 1^-}
\sum\limits_{q_1=1}^{\infty}
\sum\limits_{q_2=1}^{\infty}
\frac{\mu(q_1)}{\phi(q_1)}
\frac{\mu(q_2)}{\phi(q_2)}
r^{\left( q_1 + q_2 \right)}
\left[
\sum\limits_{n = 1}^{2 N}
c_{q_1}(n)
c_{q_2}(2 N - n)
\right]
\end{equation*}
the value of and bounds on:
\begin{equation*}
\sum\limits_{n = 1}^{2 N}
c_{q_1}(n)
c_{q_2}(2 N - n)
\end{equation*}
are not well understood at this time.

\newpage
\section{Conclusions}
The Ramanujan-Abel method is a robust framework to investigate questions regarding the
distribution of prime pairs and other prime-tuples.
The method is sufficient to prove Conjectures B and D of Hardy and Littlewood.
This is a significant result, but it is clear there there is more investigation to be done.

The author is grateful for the opportunity to effect the interchange of limits needed within the works of
Gadiyar and Padma. Given that the author received his initial  training in mathematical analysis
at the Milwaukee School of Engineering
the conclusion of Gadiyar and Padma from  \cite{bibitem:GadiyarPadma:WienerKhinchin} is best:
\\

\textit{
It is a pleasant surprise that the Wiener-Khintchine formula which normally occurs in
practical problems of Brownian motion, electrical engineering and other applied areas
of technology and statistical physics has a role in the behaviour of prime numbers
which are studied by pure mathematicians
}

\appendix

\newpage
\section{Some Properties of Ramanujan Sums} \label{appendix:Properties:RamanujanSums}
The Ramanujan sum, $c_q(n)$, is defined as
\begin{equation*}
c_q(n) = \sum\limits^{q}_{\substack{k=1 \\ (k,q)=1}} e^{2 \pi i \frac{k}{q} n }
\end{equation*}

Some of the properties of $c_q(n)$ and the mean value of $c_q(n)$ are:
\begin{enumerate}[ a) ] \label{List:Properties:RamanujanSums}
\item
\begin{equation*}
c_1(n) = 1
\end{equation*}
\item
\begin{equation*}
c_q(0) = \phi(q)
\end{equation*}
\item
\begin{equation*}
c_q(1) = \mu(q)
\end{equation*}
\item
\begin{equation*}
c_q(n) = \begin{cases}
\phi(q) & q \mid  n
\\
-1 & q \nmid  n
\end{cases}
\end{equation*}
\item
\begin{equation*}
c_{rs}(n) = c_{s}(n) c_{s}(n) \text{ where } \left( r, s\right) = 1
\end{equation*}
\item
\begin{equation*}
\left\vert c_q(n) \right\vert
\le
\phi\left( q \right)
\end{equation*}
\item
\begin{equation*}
\left\vert c_q(n) \right\vert
\le
\sigma\left( n \right)
\end{equation*}
\item
\begin{equation*}
c_q(n) = c_q(-n)
\end{equation*}
\item
\begin{equation*}
c_q(n) = c_{-q}(n)
\end{equation*}
\item
\begin{equation*}
\lim_{N \to \infty}
\frac{1}{N}
\sum_{n=1}^{N}
c_q(n) =
\begin{cases}
1 & q=1 \\
0 & \text{otherwise} \\
\end{cases}
\end{equation*}
\item
\begin{equation*}
\lim_{N \to \infty}
\frac{1}{N}
\sum_{n=1}^{N}
c_r(n)
c_s(n)
=
\begin{cases}
\phi\left( r \right) & r=s \\
0 & \text{otherwise} \\
\end{cases}
\end{equation*}
\item
\begin{equation*}
\lim_{N \to \infty}
\frac{1}{N}
\sum_{n=1}^{N}
c_r(n) c_s(n \pm m) =
\begin{cases}
c_r(m) & r = s \\
0 &\text{otherwise}
\end{cases}
\end{equation*}
\end{enumerate}
Where:
\begin{itemize}
\item $q$ is a positive integer
\item $n \in \mathbb{Z}$
\end{itemize}

\newpage
The Ramanujan sums can be extended to real values by accepting the definition:
\begin{equation*}
c_q(x) =
\begin{cases}
1 & q = 0
\\
\cos\left( 2 \pi x \right) & q = 1
\\
\cos\left( \pi x \right) & q = 2
\\
2 \sum\limits_{\substack{k=1 \\ (k,q)=1}}^{\left\lfloor \frac{q}{2} \right\rfloor}
\cos\left( 2 \pi \frac{k}{q} x \right)
& q \ge 3
\end{cases}
\end{equation*}
The real-valued Ramanujan sums maintain many of the properties of $c_q(n)$.
Some of the properties of $c_q(x)$ and the mean value of $c_q(x)$ are:
\begin{enumerate}[ a) ]
\item
\begin{equation*}
c_q(x) = c_q(n) \text{ } x \in \mathbb{Z} \text{ and } 0 < q
\end{equation*}
\item
\begin{equation*}
c_q(0) = \phi(q)
\end{equation*}
\item
\begin{equation*}
c_q(1) = \mu(q)
\end{equation*}
\item
\begin{equation*}
c_{rs}(x) = c_{s}(x) c_{s}(x) \text{ where } \left( r, s\right) = 1
\end{equation*}
\item
\begin{equation*}
\left\vert c_q(x) \right\vert
\le
\phi\left( q \right)
\end{equation*}
\item
\begin{equation*}
\left\vert c_q(x) \right\vert
\le
\sigma\left( x \right)
\end{equation*}
\item
\begin{equation*}
c_q(x) = c_q(-x)
\end{equation*}
\item
\begin{equation*}
c_{q}(x) = c_{-q}(x)
\end{equation*}
\item
\item
\begin{equation*}
\lim_{T \to \infty}
\frac{1}{2 T}
\int\limits_{x=-T}^{T}
c_q(x) dx
=
\begin{cases}
1 & q=0 \\
0 & \text{otherwise} \\
\end{cases}
\end{equation*}
\item
\begin{equation*}
\lim_{T \to \infty}
\frac{1}{2 T}
\int\limits_{x=-T}^{T}
c_r(x)
c_s(x)
dx
=
\begin{cases}
\phi\left( r \right) & r=s \\
0 & \text{otherwise} \\
\end{cases}
\end{equation*}
\item
\begin{equation*}
\lim_{T \to \infty}
\frac{1}{2 T}
\int\limits_{x=-T}^{T}
c_{r}(x) c_{s}(x \pm y) dx
=
\begin{cases}
c_{r}(y) & r = s \\
0 & \text{otherwise}
\end{cases}
\end{equation*}
\end{enumerate}
Where:
\begin{itemize}
\item $q$ is a non-negative integer
\item $n \in \mathbb{Z}$
\item $x \in \mathbb{R}$
\item The value of $\phi\left( 0 \right)$ is understood to be: $\phi\left( 0 \right) = 1$:
\item The value of $\mu\left( 0 \right)$ is understood to be: $\mu\left( 0 \right) = 1$:
\end{itemize}

\newpage

\end{document}